\newcounter{dummy}
\newcommand\myitem[1][]{\item[#1]\refstepcounter{dummy}\def\@currentlabel{#1}}
\newcommand{\midb}{\;\middle|\;}
\newcommand{\one}{\mathbbm 1}
\def\reals{\mathbb{R}}
\def\uball{\mathbb{B}}
\def\ereals{\overline{\mathbb{R}}}
\def\comp{\raise 1pt \hbox{$\scriptstyle\circ$}}
\def\dom{\mathop{\rm dom}\nolimits}
\def\upto{{\raise 1pt \hbox{$\scriptstyle \,\nearrow\,$}}}
\def\downto{{\raise 1pt \hbox{$\scriptstyle \,\searrow\,$}}}
\def\pos{\mathop{\rm pos}}
\def\B{{\cal B}}
\def\C{{\cal C}}
\def\E{{\cal E}}
\def\F{{\cal F}}
\def\G{{\cal G}}
\def\M{{\cal M}}
\def\P{{\cal P}}
\def\U{{\cal U}}
\def\Y{{\cal Y}}
\newtheorem{theorem}{Theorem}
\newtheorem{lemma}[theorem]{Lemma}
\newtheorem{corollary}[theorem]{Corollary}
\newtheorem{example}[theorem]{Example}
\newtheorem{remark}[theorem]{Remark}
\theoremstyle{definition}
\theoremstyle{empty}
\title{Topological duals of locally convex function spaces}
\author{Teemu Pennanen\thanks{Department of Mathematics, King's College London, Strand, London, WC2R 2LS, United Kingdom, teemu.pennanen@kcl.ac.uk} \and Ari-Pekka Perkki\"o\thanks{Mathematics Institute, Ludwig-Maximilian University of Munich, Theresienstr. 39, 80333 Munich, Germany, a.perkkioe@lmu.de. Corresponding author}}
\begin{document}

\maketitle

\begin{abstract}
This paper studies topological duals of locally convex function spaces that are natural generalizations of Fr\'echet and Banach function spaces (BFS). We assume a finite reference measure but the topology is generated by an arbitrary collection of seminorms that satisfy the usual BFS axioms. The dual is identified with the direct sum of another space of random variables (K\"othe dual), a space of purely finitely additive measures and the annihilator of $L^\infty$. In the special case of rearrangement invariant spaces, the second component in the dual vanishes and we obtain various classical as well as new duality results e.g.\ on Lebesgue, Orlicz, Lorentz-Orlicz spaces and spaces of finite moments. Beyond rearrangement invariant spaces, we find topological duals of Musielak-Orlicz spaces and those associated with general convex risk measures.
\end{abstract}

\noindent\textbf{Keywords.} Banach function spaces, topological duals, finitely additive measures
\newline
\newline
\noindent\textbf{AMS subject classification codes.} 46E30, 46A20, 28A25

\section{Introduction}

Banach function spaces (BFS) provide a convenient set up for functional analysis in spaces of measurable functions. Many well known properties of e.g.\ Lebesgue spaces and Orlicz spaces extend to BFS with minor modifications; see e.g.\ \cite{lux55,zaa67, bs88, kps82}. Much of the theory focuses on {\em rearrangement invariant} (ri) spaces where the norm of a function only depends on its distribution. Such spaces are arguably the most important among BFS but they do exclude some interesting cases such as Musielak-Orlicz spaces and spaces of random variables that arise in the theory of risk measures; see Section~\ref{sec:ex} below.

This paper studies the topological duals of locally convex spaces of random variables where the topology is generated by an arbitrary collection of seminorms that satisfy the usual properties of BFS-norms; see Section~\ref{sec:main} below. Building on the classical result of Yosida and Hewitt~\cite[Section~2]{yh52} on the dual of $L^\infty$, we identify the topological dual as the direct sum of another space of random variables (K\"othe dual), a space of purely finitely additive measures and the annihilator of $L^\infty$. The last two components have a singularity property that has been found useful in the analysis of convex integral functionals by Rockafellar~\cite{roc71} in the case of $L^\infty$ and by Kozek~\cite{koz80} in the case of Orlicz spaces. In the case of $L^\infty$, the last component in the dual vanishes while in other Orlicz spaces, the second one vanishes; see \cite{rr91}. Our result thus unifies the two seemingly complementary cases.

The main result is illustrated first by simple and unified derivations of various existing duality results in Musielak-Orlicz, Marcinkiewich, Lorentz and Orlicz-Lorentz spaces. In the last case, we also obtain an expression for the dual norm which seems to be new. We then go beyond the existing BFS settings by identifying topological duals of the space of random variables with finite moments, generalized Orlicz spaces as well as spaces of random variables associated with general convex risk measures. Such spaces have attracted attention in the recent literature on insurance and financial mathematics; see e.g.\ \cite{pic13}, \cite{ls17} and \cite{kp18}. 

The last section establishes the necessity of our axioms for locally convex spaces of random variables that are in separating duality with an other one. More precisely, a complete solid decomposable space in separating duality with another solid decomposable space has a compatible topology generated by a collection of seminorms satisfying the usual BFS-properties.

The rest of the paper is organized as follows. Section~\ref{sec:yh} reviews the duality theory for $L^\infty$. Section~\ref{sec:ei} extends the notion of an integral with respect to a finitely additive measure to measurable not necessarily bounded random variables. Section~\ref{sec:main} defines a general locally convex space of random variables and gives the main result of the paper by characterizing the topological dual of a space. 
Section~\ref{sec:ex} applies the main result to characterize the topological dual in various known and new settings. Section~\ref{sec:nec} concludes by illustrating the necessity of the employed axioms.

\section{Topological dual of $L^\infty$}\label{sec:yh}

Let $(\Omega,\F,P)$ be a probability space with a $\sigma$-algebra $\F$ and a countably additive probability measure $P$. This section gives a quick review of the Banach space $L^\infty$ of equivalence classes of essentially bounded measurable functions on a probability space $(\Omega,\F,P)$. We consider $\reals^n$-valued functions and endow $L^\infty$ with the norm
\[
\|u\|_{L^\infty}:=|(\|u_1\|_{L^\infty},\dots,\|u_n\|_{L^\infty})|,
\]
where $|\cdot|$ is a norm on $\reals^n$. The dual norm on $\reals^n$ is denoted by $|\cdot|^*$.


Let $\M^1$ be the set of $P$-absolutely continuous finitely additive $\reals^n$-valued measures on $(\Omega,\F)$ and let $\M^{1s}$ be set of those $m^s\in\M^1$ which are {\em singular} (``purely finitely additive'' in the terminology of \cite{yh52}; see \cite[Theorem~1.22]{yh52}) in the sense that there is a decreasing sequence $(A^\nu)_{\nu=1}^\infty\subset\F$ with $P(A^\nu)\downto 0$ and $|m^s|^*(\Omega\setminus A^\nu)=0$. Given $m\in\M^1$, the set function $|m|^*:\F\to\reals$ is defined by
\[
|m|^*(A):=|m^+(A)+m^-(A)|^*,
\]
where $i$th components of $m^+\in\M^1$ and $m^-\in\M^1$ are the positive and negative parts, respectively, of the $i$th component $m_i$ of $m$; see \cite[Theorem~1.12]{yh52}.


Recall that the space $\E$ of $\reals^n$-valued simple random variables (i.e. piecewise constant with a finite range) is dense in $L^\infty$. Given $m\in\M^1$, the integral of a $u\in\E$ is defined by
\[
\int_\Omega udm :=\sum_{j=1}^J \alpha^j m(A^j),
\]
where $A^j\in\F$ and $\alpha^j\in\reals^n$, $j=1,\ldots,m$ are such that $u=\sum_{j=1}^m \alpha^j 1_{A^j}$
On $L^\infty$, the integral is defined as the unique norm continuous linear extension from $\E$ to $L^\infty$.

The following is from \cite[Theorem~2.3]{yh52} except that we do not assume that the underlying measure space is complete. The proof uses \cite[Theorem~20.35]{hs75} which does not rely on the completeness but identifies the dual of $L^\infty$ with the space of finitely additive measures that are absolutely continuous with respect to $P$. Combined with the results of \cite[Section~1]{yh52} on decomposition of finitely additive measures then completes the proof. The extension to spaces of $\reals^n$-valued random variables is straightforward; see \cite[Lemma~1]{sat5} for an extension to Banach space-valued random variables.

\begin{theorem}[Yosida--Hewitt]\label{thm:YH}
The topological dual $(L^\infty)^*$ of $L^\infty$ can be identified with $\M^1$ in the sense that for every $u^*\in(L^\infty)^*$ there exist unique $m\in\M^1$ such that
\[
\langle u,u^*\rangle = \int_\Omega udm,
\]
where the integral is defined componentwise. The dual norm is given by
\[
\|m\|_{L^\infty}^*=|m|^*(\Omega).
\]
Moreover, $\M^1=L^1\oplus\M^{1s}$ in the sense that for every $m\in\M^1$ there exist unique $y\in L^1$ and $m^s\in\M^{1s}$ such that
\[
\int_\Omega udm = E[u\cdot y]+\int_\Omega udm^s.
\]
We have $m^s=0$ if and only if $\langle u1_{A^\nu},u^*\rangle\to 0$ for every $u\in L^\infty$ and every decreasing $(A^\nu)_{\nu=1}^\infty\subset\F$ such that $P(A^\nu)\downto 0$.
\end{theorem}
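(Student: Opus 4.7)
The plan is to assemble the theorem from three classical ingredients, essentially as indicated by the paper's remarks preceding the statement. First, for scalar random variables ($n=1$), Hewitt--Stromberg Theorem~20.35 provides an isometric isomorphism between $(L^\infty)^*$ and the space of bounded, $P$-absolutely continuous, finitely additive set functions on $\F$, realised by $\langle u,u^*\rangle = \int_\Omega u\,dm$, with dual norm equal to the total variation; importantly, that result does not require the probability space to be complete. Second, the Yosida--Hewitt decomposition from Section~1 of \cite{yh52}, together with Radon--Nikodym, gives, for any such scalar $m$, a unique splitting $m = y\cdot P + m^s$ with $y \in L^1$ and $m^s$ purely finitely additive, meaning there is a decreasing $(A^\nu)\subset\F$ with $P(A^\nu)\downto 0$ and $|m^s|(\Omega\setminus A^\nu)=0$.

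Next I would pass to the $\reals^n$-valued case componentwise. Given $u^*\in(L^\infty)^*$, each coordinate map $u_i \mapsto \langle(0,\dots,u_i,\dots,0),u^*\rangle$ is an element of the scalar dual and thus corresponds to $m_i\in\M^1$; linearity then yields $\langle u,u^*\rangle = \int_\Omega u\,dm$ interpreted componentwise, exactly as in \cite[Lemma~1]{sat5}. The dual-norm identity $\|m\|_{L^\infty}^*=|m|^*(\Omega)$ reduces to the scalar case via the definitions of the $\reals^n$-norm and its dual, combined with the paper's definition $|m|^*(A)=|m^+(A)+m^-(A)|^*$. To promote $m^s \in \M^{1s}$ from the coordinatewise decomposition, I would replace the per-coordinate sequences by a single diagonal one (e.g.\ intersecting the unions $\bigcup_i A_i^\nu$ over $\nu$) to obtain a decreasing $(A^\nu)$ with $P(A^\nu)\downto 0$ and $|m^s|^*(\Omega\setminus A^\nu)=0$ in the vectorial sense used in the statement.

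For the final equivalence, one direction is a dominated-convergence argument: if $m^s=0$, then $\langle u1_{A^\nu},u^*\rangle = E[u1_{A^\nu}\cdot y]$, which tends to $0$ whenever $P(A^\nu)\downto 0$ because $u\cdot y\in L^1$. Conversely, assume the limit vanishes for every $u\in L^\infty$ and every such sequence. Choose the specific decreasing $(A^\nu)$ witnessing the singularity of $m^s$. Since $|m^s|^*(\Omega\setminus A^\nu)=0$, we have $\int_\Omega u\,dm^s = \int_{A^\nu} u\,dm^s$ for every $\nu$, hence $\langle u 1_{A^\nu},u^*\rangle = E[u1_{A^\nu}\cdot y] + \int_\Omega u\,dm^s$. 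Letting $\nu\to\infty$ and using the previous direction on the $L^1$ part forces $\int_\Omega u\,dm^s = 0$ for every $u\in L^\infty$, so $m^s=0$ by the uniqueness part already established.

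The main obstacle, if any, is the bookkeeping in passing from the well-known scalar statements to the $\reals^n$-valued formulation: one must (i) check that the paper's definition of $|m|^*$ in terms of the $\reals^n$-dual norm yields exactly the operator norm of the integral on vector-valued $L^\infty$, and (ii) consolidate the coordinatewise singularity sequences into a single decreasing sequence witnessing membership in $\M^{1s}$. Both are routine but should be written out carefully.
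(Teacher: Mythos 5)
Your proposal is correct and follows the same overall architecture as the paper: the scalar case via Hewitt--Stromberg Theorem~20.35, the Yosida--Hewitt decomposition from Section~1 of \cite{yh52} plus Radon--Nikodym for the splitting $m=yP+m^s$, and a componentwise passage to $\reals^n$ (your consolidation of the per-coordinate singularity sequences by taking $A^\nu:=\bigcup_i A_i^\nu$ is exactly the right bookkeeping, which the paper leaves implicit). The one place where you genuinely diverge is the converse half of the final equivalence. The paper splits $m^s=m^{s+}-m^{s-}$ and invokes \cite[Theorem~1.21]{yh52} to produce a set $A$ with $m^{s+}(\Omega\setminus A)<\epsilon$ and $m^{s-}(A)<\epsilon$, then tests the functional against $u1_A1_{A^\nu}$ to force $m^{s+}(\Omega)<2\epsilon$. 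You instead observe that, for the witnessing sequence, $|m^s|^*(\Omega\setminus A^\nu)=0$ makes $m^s$ vanish on every subset of $\Omega\setminus A^\nu$, so $\int_\Omega u1_{A^\nu}\,dm^s=\int_\Omega u\,dm^s$ for all $\nu$; letting $\nu\to\infty$ and killing the $L^1$ part by dominated convergence yields $\int_\Omega u\,dm^s=0$ for all $u\in L^\infty$, hence $m^s=0$ by uniqueness of the representing measure. Your route is shorter, avoids the Hahn-type approximation result entirely, and is arguably cleaner than the paper's computation; both are valid.
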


\begin{proof}
Assume first that $n=1$. By \cite[Theorem~20.35]{hs75}, the dual of $L^\infty$ can be identified with the linear space of finitely additive $P$-absolutely continuous measures $m$ in the sense that every $u^*\in(L^\infty)^*$ can be expressed as
\[
\langle u,u^*\rangle = \int_\Omega udm
\]
and, conversely, any such integral belongs to $(L^\infty)^*$. By \cite[Theorem~1.24]{yh52}, there is a unique decomposition $m=m^a+m^s$, where $m^a$ is countably additive and $m^s$ is purely finitely additive. The construction in \cite{yh52} also shows that $m^a$ and $m^s$ are absolutely continuous with respect to $m$ and thus, absolutely continuous with respect to $P$ as well. By \cite[Theorem~1.22]{yh52}, there is a decreasing sequence $(A^\nu)_{\nu=1}^\infty\subset\F$ such that $P(A^\nu)\searrow 0$ and $m^s(\Omega\setminus A^\nu)=0$. The functional $y^s\in(L^\infty)^*$ given by
\[
\langle u,y^s\rangle := \int_\Omega udm^s
\]
then has the property in the statement. By Radon-Nikodym, there exists a $y\in L^1$ such that
\[
\langle u,u^*\rangle := E[u\cdot y] + \int_\Omega udm^s.
\]

To prove the last claim, let $u^*\in(L^\infty)^*$ and consider the representation in terms of $y\in L^1$ and $m^s\in\M^{1s}$ given by the second claim. Let $A^\nu$ be the sets in the characterization of the singularity of $m^s$. By \cite[Theorems~1.12 and 1.17]{yh52}, $m^s=m^{s+}-m^{s-}$ for nonnegative purely finitely additive $m^{s+}$ and $m^{s-}$. Given $\epsilon>0$, \cite[Theorem~1.21]{yh52} gives the existence of $A\in\F$ such that $m^{s+}(\Omega\setminus A)<\epsilon$ and $m^{s-}(A)<\epsilon$. We have
\[
\langle u1_A1_{A^\nu},u^*\rangle = E[1_A1_{A^\nu}u\cdot y] + m^s(A\cap A^\nu) \to m^s(A) > m^{s+}(\Omega)-2\epsilon.
\]
By assumption, the left side converges to zero. Since $\epsilon>0$ was arbitrary, $m^{s+}=0$. By symmetry, we must have $m^{s-}=0$ so that $m^s=0$ which means that $u^*$ is $\tau(L^\infty,L^1)$-continuous.

By \cite[Theorem~2.3]{yh52}, the dual norm of $\|\cdot\|_{L^\infty}$ is given by $\|m\|_{TV}:=m^+(\Omega)+m^-(\Omega)$. This completes the proof of the case $n=1$. The general case follows from the fact that the dual of a Cartesian product of Banach spaces is the Cartesian product of the dual spaces with the norm
\[
\|u\|_{L^\infty}^*=|(\|m_1\|_{TV},\dots,\|m_n\|_{TV})|^*,
\]
which completes the proof.
\end{proof}



\section{Extension of the integral}\label{sec:ei}

In \cite{yh52} and in Section~\ref{sec:yh}, integrals with respect to an $m\in\M^1$ were defined only for elements of $L^\infty$ as norm-continuous extensions of integrals of simple functions. Weakening the topology, it is possible to extend the definition of the integral to a larger space of measurable functions using Daniell's construction much as in \cite[Chapter~II]{bic10} who considered countably additive integrals of arbitrary (not necessarily $\F$-measurable) functions.

Another approach to integration of unbounded functions with respect to finitely additive measures is that of Dunford; see Dunford and Schwartz~\cite{ds88} or Luxemburg~\cite{lux91}. A benefit of the Daniell extension adopted here is that it gives rise to a simpler definition of integrability that is easier to verify for larger classes of measurable functions.

Given $m\in\M^1$, we define $\rho_m:L^1\to\ereals$ by
\[
\rho_m(u) := \sup_{u'\in L^\infty}\left\{\int_\Omega u'dm \midb |u'_j|\le|u_j|\ \forall j=1,\dots,n\right\}.
\]
We denote $\dom\rho_m :=\{u\in L^1 \mid \rho_m(u)<\infty\}$.
\begin{lemma}\label{lem:ext}
The function $\rho_m$ is a seminorm on $\dom\rho_m$ and
\[
|\int_\Omega udm| \le \rho_m(u)
\]
for all $u\in L^\infty$. For every $u\in\dom\rho_m$ and $\epsilon>0$, there exists a $u'\in L^\infty$ such that $\rho_m(u-u')\le\epsilon$.
\end{lemma}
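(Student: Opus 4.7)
The plan is to work directly from the sup definition of $\rho_m$, reducing the seminorm properties and the approximation claim to a common componentwise splitting of $L^\infty$-functions. Homogeneity is immediate by rescaling the dummy variable in the sup. The middle bound $|\int_\Omega u\,dm|\le\rho_m(u)$ on $L^\infty$ follows by taking $u' = u$ and $u' = -u$ as admissible competitors, both of which satisfy $|u'_j|\le|u_j|$.

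The technical core is the following splitting lemma for $L^\infty$-functions: if $w\in L^\infty$ and $|w_j|\le a_j + b_j$ pointwise for nonnegative measurable $a_j, b_j$, then $w = w^{(1)} + w^{(2)}$ with $w^{(k)} \in L^\infty$, $|w^{(1)}_j|\le a_j$, $|w^{(2)}_j|\le b_j$. Explicitly, I would take $w^{(1)}_j := \mathrm{sign}(w_j)\min(|w_j|, a_j)$ and $w^{(2)}_j := w_j - w^{(1)}_j$; a short case check verifies the bounds and that both pieces satisfy $|w^{(k)}_j|\le|w_j|$, hence lie in $L^\infty$. Combined with linearity of the integral on $L^\infty$ (inherited from $\E$ by the norm-continuous extension), applying the lemma with $a_j = |u_j|$, $b_j = |v_j|$ yields the triangle inequality, so that $\rho_m$ is a seminorm on $\dom\rho_m$.

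The hard part will be the approximation. I would introduce the componentwise truncation $u^N_j := \mathrm{sign}(u_j)\min(|u_j|, N)$, which lies in $L^\infty$ and satisfies $|u^N_j| + |u_j - u^N_j| = |u_j|$, and then establish the decomposition
\begin{equation*}
\rho_m(u) = \rho_m(u^N) + \rho_m(u - u^N).
\end{equation*}
The ``$\le$'' direction uses the splitting lemma with $a_j = |u^N_j|$, $b_j = |u_j - u^N_j|$; the ``$\ge$'' direction uses that admissible competitors for $\rho_m(u^N)$ and $\rho_m(u - u^N)$ sum to an admissible competitor for $\rho_m(u)$, by the triangle inequality on $\reals$. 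Next I would verify $\rho_m(u^N)\to\rho_m(u)$: any near-optimizer $w\in L^\infty$ of the sup defining $\rho_m(u)$ is bounded, say $|w_j|\le M$ a.s., and for $N\ge M$ it satisfies $|w_j|\le\min(|u_j|, N)=|u^N_j|$, making $w$ admissible for $\rho_m(u^N)$. Since $\rho_m(u)<\infty$ by assumption, subtracting the finite quantity $\rho_m(u^N)$ in the decomposition gives $\rho_m(u - u^N)\to 0$, so $u' := u^N$ provides the required approximation for all sufficiently large $N$. The main obstacle is the decomposition identity, but once the splitting lemma of paragraph two is in place it is essentially bookkeeping.
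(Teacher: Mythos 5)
Your proof is correct, and it takes a genuinely different route from the paper's. The paper first reduces to $n=1$ via $\rho_m(u)=\sum_j\rho_{m_j}(u_j)$ and then defers to an appendix theorem built around the auxiliary functional $r_m(\eta)=\sup\{\int_\Omega u'dm \mid |u'|\le\eta\}$ on $L^1_+$: subadditivity is obtained there from the monotone-truncation identity $r_m(\eta)=\lim_\nu r_m(\eta\wedge\nu)$ together with $(\eta^1+\eta^2)\wedge\nu\le\eta^1\wedge\nu+\eta^2\wedge\nu$, and the density claim is proved by Jordan-decomposing $m=m^+-m^-$ and showing that $\rho_{m^\pm}$ is superadditive (hence additive) on $\dom\rho_{m^\pm}\cap L^1_+$. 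Your pointwise splitting lemma $w=w^{(1)}+w^{(2)}$ with $w^{(1)}_j=\mathrm{sign}(w_j)\min(|w_j|,a_j)$ replaces both devices at once: it gives the triangle inequality directly in $\reals^n$ without reducing to $n=1$, and, because the signed truncation satisfies the exact identity $|u^N_j|+|u_j-u^N_j|=|u_j|$, it yields the additivity $\rho_m(u)=\rho_m(u^N)+\rho_m(u-u^N)$ for \emph{signed} $m$ without any Jordan decomposition. The remaining steps (near-optimizers are bounded, hence admissible for $\rho_m(u^N)$ once $N$ exceeds their bound; subtracting the finite term $\rho_m(u^N)\le\rho_m(u)<\infty$) are sound. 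What your argument does not reproduce is the extra structure the paper's appendix develops along the way — in particular the support property of $\rho_m$ for purely finitely additive $m$ used later in Lemma~\ref{lem:M} — but for the lemma as stated your proof is complete and arguably more elementary.
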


\begin{proof}
We have
\[
\rho_m (u) :=\sum_{j=1}^n \rho_{m_j}(u_j)
\]
where 
\[
\rho_{m_j}(u_j) = \sup_{u'\in L^\infty(\reals)}\{ \int_\Omega u'dm_j \mid |u'|\le |u_j|\}.
\]
Thus we may assume that $n=1$ and the claims from Theorem~\ref{thm:ext1d} in the appendix.
\end{proof}

By Lemma~\ref{lem:ext}, the integral is $\rho_m$-continuous on $L^\infty$ and $L^\infty$ is $\rho_m$-dense in $\dom \rho_m$. Thus the integral has a unique $\rho_m$-continuous linear extension to $\dom\rho_m$. We call the extension the {\em $m$-integral} of $u$ and denote it by
\[
\int_\Omega udm.
\]
The elements of $\dom\rho_m$ will be said to be {\em $m$-integrable}. If $m$ is countably additive, then, e.g., by the interchange rule~\cite[Theorem~14.60]{rw98},
\[
\rho_m(u) = \sum_{j=1}^n\int_\Omega|u_j|d|m_j| = \sum_{j=1}^n E[|u_j||y_j|],
\]
where $y$ is the density of $m$, and thus, 
\[
\dom\rho_m=\{u\mid u_j\in L^1(\Omega,\F,|m_j|)\ \forall j=1,\dots,n\}.
\]
In this case, the integral is the Lebesgue integral.

\section{Topological duals of spaces of random variables}\label{sec:main}

This section presents the main results of the paper. The set up extends that of Banach function spaces by replacing the norm by an arbitrary collection of seminorms thus covering more general locally convex spaces of random variables. The main result identifies the topological dual of the space with the direct sum of the K\"othe dual and two spaces of singular functionals, the first of which is represented by finitely additive measures while the second is the orthogonal complement of $L^\infty$.

Let $\P$ be a collection of sublinear symmetric functions $p:L^1\to\ereals$, define
\[
\U:=\bigcap_{p\in\P}\dom p,
\]
and endow $\U$ with the locally convex topology generated by $\P$. Our aim is to characterize the topological dual $\U^*$ of $\U$. To this end, we will assume that
\begin{enumerate}[label={(A\arabic*)},ref=(A\arabic*)]
\item\label{ax1}
  the topology of $\U$ is no weaker than the relative $L^1$-topology,
\end{enumerate}
and that each $p\in\P$ satisfies
\begin{enumerate}[resume,label={(A\arabic*)},ref=(A\arabic*)]
\item\label{ax2}
  there exists a constant $c$ such that $p(u)\le c\|u\|_{L^\infty}$ for all $u\in L^1$,
\item\label{ax3}
  $p(u')\le p(u)$ for every $u\in\U$ and $u'\in L^1$ with $|u'|\le|u|$.
\end{enumerate}
Occasionally, we will also assume the following
\begin{enumerate}[resume,label={(A\arabic*)},ref=(A\arabic*)]
\item\label{ax4}
  $p(u1_{A^\nu})\downto 0$ for all $u\in L^\infty$ and decreasing sequence $(A^\nu)_{\nu=1}^\infty\subset\F$ with $P(A^\nu)\downto 0$.
\item\label{ax5}
  $p(u1_{A^\nu})\downto 0$ for all $u\in\U$ and decreasing sequence $(A^\nu)_{\nu=1}^\infty\subset\F$ with $P(A^\nu)\downto 0$.
\end{enumerate}
It is clear that \ref{ax4} and \ref{ax5} are implied by the following order continuity properties:
\begin{itemize}
\item[(A4')]\label{ax4b}
  $p(u^\nu)\downto 0$ for all $(u^\nu)\in L^\infty$ such that $|u^\nu|\downto 0$.
\item[(A5')]\label{ax5b}
  $p(u^\nu)\downto 0$ for all $(u^\nu)\in\U$ such that $|u^\nu|\downto 0$.
\end{itemize}

When $\P$ is a singleton, $\U$ is a normed space and we are in the setting of normed K\"othe function spaces; see e.g.~\cite{zaa67}. If, in addition, $p$ is lower semicontinuous in $L^1$, then $\U$ is a Banach function space; see Remark~\ref{rem:complete} below. In the Banach space setting, \ref{ax1} and \ref{ax2} hold under (A5') if $\U$ has a weak unit; see e.g.~\cite[Theorem~1.b.14]{lt79}. Necessity of the axioms will be discussed in more detail in Section~\ref{sec:nec}.

\begin{remark}\label{rem:ri}
Given $p\in\P$, we define, 
\begin{align*}
\hat\phi_p(t):=\sup_{A\in\F}\{p(1_A)\mid P(A)\le t\},\\
\check\phi_p(t):=\inf_{A\in\F}\{ p(1_A)\mid P(A)\ge t\}.
\end{align*}
Since we assume \ref{ax2} and \ref{ax3}, the condition \ref{ax4} is equivalent to 
\[
\lim_{t\searrow 0}\hat\phi_p(t)=0.
\]
If $\lim_{t\searrow 0}\check\phi_p(t)>0$, then $\U=L^\infty$. 

Assume now that $p$ is {\em rearrangement invariant} in the sense that $p(u)=p(\tilde u)$ whenever $u$ and $\tilde u$ have the same distribution. Then, for any $A\in\F$ with $P(A)=t$,
\[
\hat\phi_p(t) = \check\phi_p(t) = p(1_A)
\]
where the common value is known as the {\em fundamental function}. In particular, $\dom p= L^\infty$ if \ref{ax4} does not hold while \ref{ax4} is equivalent to $\lim_{t\searrow 0}\hat\phi_p(t)=0$.
\end{remark}
\begin{proof}
Assuming \ref{ax4}, let $t^\nu\searrow 0$. There exists $(A^\nu)_{\nu=1}^\infty$ such that $P(A^\nu)\le t^\nu$ and $\hat\phi_p(t^\nu)\le p(1_{A^\nu})+1/\nu$. Passing to a subsequence if necessary, $1_{A^\nu}\to 0$ almost surely. Defining $\hat A^\nu:=\bigcup_{\nu'\ge \nu} A^\nu$, $(\hat A^\nu)_{\nu=1}^\infty $ is decreasing with $A^\nu\subset\hat A^\nu$ and $P(\hat A^\nu)\searrow 0$, so, by \ref{ax3}--\ref{ax4}
\[
\hat \phi(t^\nu)\le p(1_{\hat A^\nu})+1/\nu\searrow 0.
\]
For the converse, let  $u\in L^\infty$ and $(A^\nu)_{\nu=1}^\infty\subset\F$ with $t^\nu:=P(A^\nu)\searrow 0$. By \ref{ax3}, $p(u1_A^\nu)\le \|u\|_{L^\infty} \hat\phi(t^\nu)\searrow 0$. 

If $\lim_{t\searrow 0}\check\phi_p(t)>\delta$ for some $\delta>0$, then $p(u) \ge p(\nu 1_{|u|\ge \nu})\ge \delta\nu$ whenever $P(\{|u|\ge \nu\})>0$, so $p(u)=+\infty$ if $u\notin L^\infty$.
\end{proof}

\begin{remark}\label{rem:complete}
As soon as \ref{ax1} holds, (relative) weak  compactness and sequential (relative) weak compactness on $\U$ are equivalent (Eberlein--Smulian property). If, in addition, $p$ are lower semicontinuous on $L^1$, then $\U$ is complete. In this case, $\U$ is a Banach/Fr\'echet space if $\P$ is a singleton/countable.

Denoting $p(u)=\rho(|u|)$, the function $p$ is lsc in $L^1$ if and only if $\rho$ has the Fatou property: for any sequence $(\eta^\nu)_{\nu=1}^\infty\subset L^1_+$ with $\eta \upto \eta\in L^1_+$, $\lim \rho(\eta^\nu)=\rho(\eta)$. 
\end{remark}

\begin{proof}
The first claim follows from the Theorem on p.~31 and Remark (2) on p.~39 in \cite{flo80}. If $(u^\nu)$ is a Cauchy net in $\U$, it is Cauchy also in $L^1$ so it $L^1$-converges to an $u\in L^1$. Being Cauchy in $\U$ means that for every $\epsilon>0$ and $p\in\P$, there is a $\bar\nu$ such that
\[
p(u^\nu-u^\mu)\le\epsilon\quad\forall\nu,\mu\ge\bar\nu.
\]
The lower semicontinuity then gives
\[
p(u^\nu-u)\le\epsilon\quad\forall\nu\ge\bar\nu
\]
so $u\in\U$, by triangle inequality, and $(u^\nu)$ converges in $\U$ to $u$. Thus $\U$ is complete.

If $p$ is lsc, $\liminf \rho(\eta^\nu)\ge \rho(\eta)$ while \ref{ax3} gives $\limsup \rho(\eta^\nu)\le \rho(\eta)$. If Fatou property holds and $u^\nu\to u$ in $L^1$, then, passing to a subsequence if necessary, $\bar \eta^\nu := \inf_{\nu'\ge\nu} |u^{\nu'}|$ increases pointwise to $|u|$, so $p(u)=\liminf \rho(\bar \eta^\nu)\le \liminf p(u^\nu)$.
\end{proof}


\begin{remark}\label{rem:decomp}
Under \ref{ax2} and \ref{ax3}, $\U$ is solid and decomposable. Solidity means that  $u\in\U$, $u'\in L^1$ and $|u'|\le|u|$ imply $u'\in\U$. Decomposability means that $u1_A+\bar u1_{\Omega\setminus A}\in\U$ for every $u\in\U$, $\bar u\in L^\infty$ and $A\in\F$.
\end{remark}

\begin{proof}
Assumption~\ref{ax2} implies that $L^\infty\subset\U$ while \ref{ax3} gives $u1_A\subset\U$ whenever $A\in\F$ and $u\in\U$. Since $\U$ is a linear space, the claim follows. 
\end{proof}

For each $p\in\P$, we define a sublinear symmetric function $p^\circ$ on $\M^1$ by
\[
p^\circ(m) := \sup_{u\in L^\infty}\left\{\int_\Omega udm\,\left.\right|\,p(u)\le 1\right\}.
\]

\begin{lemma}\label{lem:M}
Let $p\in\P$. For each $m\in\dom p^\circ$, every $u\in\dom p$ is $m$-integrable and 
\[
\int_\Omega udm\le p(u)p^\circ(m).
\]
For every $m\in\dom p^\circ$, there exist unique $y\in L^1\cap\dom p^\circ$ and $m^s\in\M^{1s}\cap\dom p^\circ$ such that 
\[
\int_\Omega udm = E[u\cdot y]+\int_\Omega udm^s\quad\forall u\in\dom p.
\]
Given $m^s\in\M^{1s}\cap\dom p^\circ$, there exists a decreasing $(A^\nu)_{\nu=1}^\infty\subset\F$ such that $P(A^\nu)\downto 0$ and 
\[
\int u1_{\Omega\backslash A^\nu} dm^s=0
\]
for every $u\in\dom p$. Under \ref{ax4}, $\M^{1s}\cap\dom p^\circ=\{0\}$.
\end{lemma}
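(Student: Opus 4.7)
My plan is to proceed in four steps, corresponding to the four assertions, with a single unifying tool: a decreasing sequence $(A^\nu)\subset\F$ with $P(A^\nu)\downto 0$ that realizes the singularity of $m^s$, so that every integral against $m^s$ can be truncated to an event of arbitrarily small probability.

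For the first assertion, I fix $u\in\dom p$ and note that any $u'\in L^\infty$ with $|u'_j|\le|u_j|$ satisfies $p(u')\le p(u)$ by~\ref{ax3}; the definition of $p^\circ$ then yields $\int_\Omega u'\,dm\le p(u)p^\circ(m)$, so taking the supremum gives $\rho_m(u)\le p(u)p^\circ(m)$. Thus $u\in\dom\rho_m$ and the bound on the $m$-integral is inherited from the $\rho_m$-continuous extension of Lemma~\ref{lem:ext}.

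For the second assertion, Theorem~\ref{thm:YH} provides the decomposition $m=y+m^s$ with $y\in L^1$ and $m^s\in\M^{1s}$. Picking the singularity sequence $(A^\nu)$ for $m^s$ and splitting any test $u'\in L^\infty$ with $p(u')\le 1$ as $u'=u'1_{\Omega\setminus A^\nu}+u'1_{A^\nu}$, the vanishing of $m^s$ off $A^\nu$ yields $E[u'1_{\Omega\setminus A^\nu}\cdot y]=\int u'1_{\Omega\setminus A^\nu}\,dm\le p^\circ(m)$, and dominated convergence in $\nu$ upgrades this to $E[u'\cdot y]\le p^\circ(m)$, whence $p^\circ(y)\le p^\circ(m)$. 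The complementary identity $\int u'\,dm^s=\int u'1_{A^\nu}\,dm-E[u'1_{A^\nu}\cdot y]$ combined with $\nu\to\infty$ gives $p^\circ(m^s)\le p^\circ(m)$. Repeating the same truncation-and-limit device with $\rho_m$ in place of $p^\circ$ produces $\rho_y(v),\rho_{m^s}(v)\le\rho_m(v)$ on $L^1$, so a single $\rho_m$-approximation of $u\in\dom p$ by bounded functions (Lemma~\ref{lem:ext}) is automatically a $\rho_y$- and $\rho_{m^s}$-approximation, and the identity $\int u\,dm=E[u\cdot y]+\int u\,dm^s$ passes from $L^\infty$ to $\dom p$ by continuity. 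Uniqueness reduces to $L^1\cap\M^{1s}=\{0\}$.

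The third assertion follows from the same approximation: for $u\in\dom p$, pick bounded $u^k$ with $\rho_{m^s}(u-u^k)\to 0$; solidity of $\rho_{m^s}$ in its argument gives $\rho_{m^s}(u1_{\Omega\setminus A^\nu}-u^k 1_{\Omega\setminus A^\nu})\to 0$, and each $\int u^k 1_{\Omega\setminus A^\nu}\,dm^s$ vanishes by Theorem~\ref{thm:YH} and the singularity of $m^s$, so the limit vanishes as well. For the final assertion, any $u\in L^\infty$ satisfies $\int u\,dm^s=\int u1_{A^\nu}\,dm^s$, whose absolute value is at most $p(u1_{A^\nu})p^\circ(m^s)$ by the first assertion; under~\ref{ax4} this tends to $0$, so $m^s$ annihilates $L^\infty$ and the uniqueness in Theorem~\ref{thm:YH} forces $m^s=0$. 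The main obstacle, as I see it, is establishing the monotonicity $\rho_y,\rho_{m^s}\le\rho_m$ that underlies the extension of the Yosida--Hewitt decomposition from the bounded to the unbounded case; the truncation-by-$(A^\nu)$ argument is the non-routine ingredient, and every subsequent step reduces to it.
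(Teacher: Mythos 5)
Your proof is correct, and it follows the same overall route as the paper: the pointwise bound $\rho_m(u)\le p(u)p^\circ(m)$ via \ref{ax3}, the Yosida--Hewitt decomposition from Theorem~\ref{thm:YH}, and the singularity sets $(A^\nu)$ as the engine for everything else. The one step you execute differently is the key domination $p^\circ(y)\le p^\circ(m)$ and $p^\circ(m^s)\le p^\circ(m)$: you truncate a test function $u'$ by $1_{\Omega\setminus A^\nu}$ (resp.\ $1_{A^\nu}$), use \ref{ax3} to keep it in the unit ball of $p$, and pass to the limit by dominated convergence; the paper instead forms the single convex combination $u^\nu=\lambda u1_{\Omega\setminus A^\nu}+(1-\lambda)u^s1_{A^\nu}$ and lets $\lambda$ run to the endpoints, which avoids any limit theorem for the $L^1$ part. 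Both arguments are valid and of comparable length; yours treats $y$ and $m^s$ separately and is arguably more transparent, while the paper's handles them simultaneously. You also make explicit two points the paper leaves implicit: the extension of the identity $\int u\,dm=E[u\cdot y]+\int u\,dm^s$ from $L^\infty$ to $\dom p$ via the inequalities $\rho_y\le\rho_m$ and $\rho_{m^s}\le\rho_m$ (the paper relies on $y,m^s\in\dom p^\circ$ together with Lemma~\ref{lem:ext}), and the third assertion, which the paper delegates to Theorem~\ref{thm:ext1d}(4) in the appendix; your density argument for it is the same as the one given there.
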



\begin{proof}
Lemma~\ref{lem:ext} and \ref{ax3} give
\[
\int_\Omega udm\le\rho_m(u) \le \sup_{u'\in L^\infty}\{\int_\Omega u'dm \mid |u'|\le |u|\}\le p(u)p^\circ(m).
\]
By Theorem~\ref{thm:YH}, there exist $y\in L^1$ and $m^s\in(L^\infty)^s$ such that $m=yP+m^s$. Let $\alpha<p^\circ(y)$ and $\alpha^s<p^\circ(m^s)$ and $u,u^s\in L^\infty$ such that $p(u),p(u^s)\le 1$ and
\[
\int_\Omega uydP\ge\alpha\quad\text{and}\quad\int_\Omega u^sdm^s\ge\alpha^s.
\]
Let $(A^\nu)_{\nu=1}^\infty\subset\F$ be decreasing with $P(A^\nu)\downto 0$ and $m^s(\Omega\setminus A^\nu)=0$ and let $u^\nu=\lambda u1_{\Omega\setminus A^\nu}+(1-\lambda)u^s1_{A^\nu}$, where $\lambda\in(0,1)$. By convexity and \ref{ax3},
\[
p(u^\nu)\le\lambda p(u1_{\Omega\setminus A^\nu})+(1-\lambda)p(u^s1_{A^\nu})\le\lambda p(u)+(1-\lambda)p(u^s)\le 1
\]
while
\[
\limsup\int_\Omega u^\nu dm \ge\lambda\alpha+(1-\lambda)\alpha^s.
\]
Thus, $p^\circ(m)\ge\lambda\alpha+(1-\lambda)\alpha^s$. Since $\alpha<p^\circ(y)$ and $\alpha^s<p^\circ(m^s)$ were arbitrary, $p^\circ(m)\ge\lambda p^\circ(y)+(1-\lambda)p^\circ(m^s)$. Since $\lambda\in(0,1)$ was arbitrary, we get $p^\circ(y)\le p^\circ(m)$ and $p^\circ(m^s)\le p^\circ(m)$. Thus, $y\in\dom p^\circ$ and $m^s\in\dom p^\circ$.

To prove the last claim, let $m^s\in\M^{1s}\cap\dom p^\circ$. By the first claim,
\[
\int_\Omega u1_Adm^s\le p(u1_A)p^\circ(m^s)\quad\forall u\in L^\infty,A\in\F
\]
so, by the last claim of Theorem~\ref{thm:YH}, condition \ref{ax4} implies $m^s=0$.
\end{proof}

Let $\M$ be the set of $P$-absolutely continuous finitely additive measures $m$ such that $p^\circ(m)<\infty$ for some $p\in\P$. The set of purely finitely additive elements of $\M$ will be denoted by $\M^s$. The set of densities $y=dm/dP$ of countably additive $m\in\M$ will be denoted by $\Y$.

The following is the main result of this section. It identifies the topological dual of $\U$ with the direct sum of the K\"othe space, purely finitely additive measures $\M^s$ and the annihilator
\[
(L^\infty)^\perp :=\{w\in \U^*\mid \langle u,w\rangle =0\ \forall u\in L^\infty\}
\]
of $L^\infty$. 
\begin{theorem}\label{thm:f2}
We have
\[
\U^* = \Y\oplus\M^s\oplus (L^\infty)^\perp
\]
in the sense that for every $u^*\in\U^*$ there exist unique $y\in\Y$, $m^s\in\M^s$ and $w\in (L^\infty)^\perp$ such that
\[
\langle u,u^*\rangle = E[u\cdot y]+\int_\Omega u dm^s + \langle u,w\rangle.
\]
For every $u\in\U$ and $m\in\M$,
\[
\int_\Omega udm \le p(u)p^\circ(m).
\]
Given $w\in(L^\infty)^\perp$ and $u\in\U$,  there exists a decreasing sequence $(A^\nu)_{\nu=1}^\infty\subset\F$ with $P(A^\nu)\searrow 0$ and
\[
\langle u,w\rangle = \langle u1_{A^\nu},w\rangle\quad\forall\nu=1,2,\ldots.
\]
Under \ref{ax4}, $\M^s=\{0\}$ and under \ref{ax5}, $(L^\infty)^\perp=\{0\}$.
\end{theorem}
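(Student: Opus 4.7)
The plan is to decompose any $u^* \in \U^*$ by first restricting to $L^\infty$ and invoking Theorem~\ref{thm:YH}, then splitting the resulting finitely additive measure via Lemma~\ref{lem:M}, and finally peeling off the remainder as the $(L^\infty)^\perp$-component. Concretely, continuity at the origin in the $\P$-topology gives some $p \in \P$ and $c \ge 0$ with $|\langle u, u^*\rangle| \le c\, p(u)$ for all $u \in \U$, so by \ref{ax2} the restriction $u^*|_{L^\infty}$ is norm-continuous; Theorem~\ref{thm:YH} then supplies a unique $m \in \M^1$ representing it, and the same bound forces $p^\circ(m) \le c$, whence $m \in \M$.

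Lemma~\ref{lem:M} applied with this $p$ yields the decomposition $m = yP + m^s$ with $y \in L^1 \cap \dom p^\circ \subset \Y$ and $m^s \in \M^{1s} \cap \dom p^\circ \subset \M^s$, and shows that every $u \in \dom p$ (hence every $u \in \U$) is $m$-integrable with $\int u\,dm = E[u\cdot y] + \int u\,dm^s$. Defining
\[
\langle u, w\rangle := \langle u, u^*\rangle - E[u\cdot y] - \int_\Omega u\,dm^s, \quad u \in \U,
\]
the two subtracted functionals are $p$-continuous on $\U$ by Lemma~\ref{lem:M}, so $w \in \U^*$, and the Yosida--Hewitt representation on $L^\infty$ forces $w \in (L^\infty)^\perp$. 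Uniqueness is immediate: if $E[u\cdot y] + \int u\,dm^s + \langle u, w\rangle$ vanishes on $\U$, restricting to $L^\infty$ and invoking Theorem~\ref{thm:YH} forces $y = 0$ and $m^s = 0$, hence also $w = 0$.

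The inequality $\int u\,dm \le p(u)p^\circ(m)$ is exactly the estimate of Lemma~\ref{lem:M} (trivial when $p^\circ(m) = \infty$). For the truncation identity, given $w \in (L^\infty)^\perp$ and $u \in \U \subset L^1$ set $A^\nu := \{\max_j|u_j| > \nu\}$: by Chebyshev $P(A^\nu) \searrow 0$, and since $u\,1_{\Omega\setminus A^\nu} \in L^\infty$ we have $\langle u\,1_{\Omega\setminus A^\nu}, w\rangle = 0$, so $\langle u, w\rangle = \langle u\,1_{A^\nu}, w\rangle$. The two vanishing assertions then drop out: under \ref{ax4}, the last clause of Lemma~\ref{lem:M} gives $\M^{1s} \cap \dom p^\circ = \{0\}$ for every $p \in \P$, so $\M^s = \{0\}$; under \ref{ax5}, combining the truncation identity with the continuity bound $|\langle u\,1_{A^\nu}, w\rangle| \le c\, p(u\,1_{A^\nu}) \searrow 0$ forces $\langle u, w\rangle = 0$ for all $u \in \U$.

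The main delicate point I anticipate is making sense of the two ``integral pieces'' as genuinely $\U$-continuous functionals, not just densely defined from $L^\infty$; this is exactly what the Daniell extension of Section~\ref{sec:ei} combined with Lemma~\ref{lem:M} provides, since it both defines $\int u\,dm$ for unbounded $u \in \U$ and supplies the seminorm control needed to place $w$ in $\U^*$.
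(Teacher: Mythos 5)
Your proposal is correct and follows essentially the same route as the paper: continuity gives the bound $|\langle u,u^*\rangle|\le c\,p(u)$, Theorem~\ref{thm:YH} represents the restriction to $L^\infty$ by some $m\in\M^1$ with $p^\circ(m)\le c$, Lemma~\ref{lem:M} splits $m$ and extends it continuously to $\U$, and the remainder $w$ lands in $(L^\infty)^\perp$; the truncation, uniqueness, and vanishing arguments also match. The only cosmetic difference is that for \ref{ax5} the paper phrases the conclusion as density of $L^\infty$ in $\U$, whereas you run the same estimate $|\langle u1_{A^\nu},w\rangle|\le c\,p(u1_{A^\nu})\searrow 0$ directly on $w$ — these are equivalent.
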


\begin{proof}
By Lemma~\ref{lem:M}, $\M\subset\U^*$, so $\M\oplus (L^\infty)^\perp\subseteq \U^*$. To prove the opposite inclusion, let $u^*\in\U^*$. There exists $p\in\P$ and $\gamma>0$ such that $u^*\le \gamma p$. Assumption~\ref{ax2} implies that $u^*$ is continuous in $L^\infty$. By Theorem~\ref{thm:YH}, there exists a unique $m\in\M^1$ such that $ \langle u,u^*\rangle = \int_\Omega udm$ for all $u\in L^\infty$. Since $u^*\le \gamma p$, we have $m\in\dom p^\circ$, so $m$ is continuous on  $\U$ by Lemma~\ref{lem:M}. Now $w := u^*-m$ belongs to $(L^\infty)^\perp$, so $u^*$ has the required decomposition. Given another decomposition $u^*=\tilde m +\tilde w$ with $\tilde w\in (L^\infty)^\perp$ and $\tilde m\in\M$, we have $(m-\tilde m) +(w-\tilde w)=0$. Thus $\int_\Omega ud(m-\tilde m) =0$ for all $u\in L^\infty$, so $m-\tilde m=0$ and hence also $w-\tilde w=0$, so the decomposition is unique.

The inequality follows directly from that of Lemma~\ref{lem:M}. Let $u\in\U$ and $A^\nu :=\{|u|>\nu\}$. Clearly $P(A^\nu)\searrow 0$ and $u1_{\Omega\setminus A^\nu}\in L^\infty$, so $\langle u1_{\Omega\setminus A^\nu},w\rangle=0$ and thus $w$ is singular. That $\M=\Y$ under \ref{ax4} is the last claim of Lemma~\ref{lem:M}. Under \ref{ax5}, the truncations $u^\nu:=u1_{\{|u|\ge\nu\}}$ of any $u\in\U$ converge to $u$ so $L^\infty$ is dense in $\U$ and thus, $(L^\infty)^\perp=\{0\}$. 
\end{proof}
Applications of Theorem~\ref{thm:f2} are given in Section~\ref{sec:ex}. When $\P$ is a singleton, we are in the setting of \cite{zaa67}, where the dual of $\U$ is decomposed into a direct sum of $\Y$ and "singular elements". Theorem~\ref{thm:f2} gives  a more precise description of the singular elements as a direct sum of $\M^s$ and $(L^\infty)^\perp$. 

Note that the inequality in Theorem~\ref{thm:f2} implies that $p^\circ$ coincides on $\M$ with the polar (i.e., the dual seminorm) of $p$. An application of Theorem~\ref{thm:f2} and the Hahn-Banach theorem gives the following result, where $\tilde\U$ is the closure of $L^\infty$ in $\U$.

\begin{corollary}\label{cor:f2}
We have
\[
\tilde \U^* = \M
\]
in the sense that for every $\tilde u^*\in\tilde\U^*$ there exist unique $m\in\M$ such that
\[
\langle \tilde u,\tilde u^*\rangle = \int_\Omega \tilde u dm.
\]
In particular, if \ref{ax4} holds, then $\tilde\U^*=\Y$ and if \ref{ax5} holds, then $\tilde\U=\U$.
\end{corollary}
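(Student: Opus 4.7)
The plan is to deduce the corollary from Theorem~\ref{thm:f2} by extending a continuous functional on $\tilde\U$ to one on $\U$ via the Hahn--Banach theorem and then killing the $(L^\infty)^\perp$ component through the density of $L^\infty$ in $\tilde\U$.

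First, given $\tilde u^*\in\tilde\U^*$, since $\tilde\U$ is a linear subspace of the locally convex space $\U$ and $\tilde u^*$ is continuous with respect to the topology $\U$ induces on $\tilde\U$, the Hahn--Banach theorem provides a continuous extension $u^*\in\U^*$. By Theorem~\ref{thm:f2}, there exist unique $y\in\Y$, $m^s\in\M^s$ and $w\in(L^\infty)^\perp$ such that
\[
\langle u,u^*\rangle = E[u\cdot y]+\int_\Omega u\,dm^s + \langle u,w\rangle\qquad\forall u\in\U.
\]

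Next, I would argue that the singular piece $w$ contributes nothing on $\tilde\U$. Indeed, $w$ vanishes on $L^\infty$ by definition of $(L^\infty)^\perp$, and $w$ is continuous on $\U$ and hence on $\tilde\U$. Since $\tilde\U$ is by definition the closure of $L^\infty$ in $\U$, continuity forces $w|_{\tilde\U}=0$. Setting $m:=yP+m^s\in\M$, Lemma~\ref{lem:M} gives
\[
\langle \tilde u,\tilde u^*\rangle = E[\tilde u\cdot y]+\int_\Omega \tilde u\,dm^s = \int_\Omega \tilde u\,dm\qquad\forall \tilde u\in\tilde\U,
\]
which is the required representation. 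Uniqueness of $m$ is immediate from Theorem~\ref{thm:YH}: if $m,\tilde m\in\M$ both represent $\tilde u^*$ on $\tilde\U$, then in particular $\int_\Omega u\,dm=\int_\Omega u\,d\tilde m$ for all $u\in L^\infty\subseteq\tilde\U$, and Theorem~\ref{thm:YH} forces $m=\tilde m$. Conversely, every $m\in\M$ restricts to an element of $\tilde\U^*$ by the inequality in Theorem~\ref{thm:f2}, so the identification $\tilde\U^*=\M$ is complete.

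Finally, for the two special cases, I would simply invoke the last sentence of Theorem~\ref{thm:f2}. Under \ref{ax4}, $\M^s=\{0\}$, so $\M=\Y$ and hence $\tilde\U^*=\Y$. Under \ref{ax5}, the truncation argument already recorded in the proof of Theorem~\ref{thm:f2} shows that $L^\infty$ is dense in $\U$, i.e.\ $\tilde\U=\U$. The only conceptual subtlety is the Hahn--Banach extension step and the verification that $w$ vanishes on $\tilde\U$; once these are in place everything else is a direct application of earlier results, so I do not expect any serious obstacle.
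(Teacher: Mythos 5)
Your proof is correct and follows exactly the route the paper intends: the paper gives no written proof but states that the corollary is ``an application of Theorem~\ref{thm:f2} and the Hahn--Banach theorem,'' which is precisely your argument (extend by Hahn--Banach, decompose via Theorem~\ref{thm:f2}, kill the $(L^\infty)^\perp$ component by continuity and density of $L^\infty$ in $\tilde\U$, and get uniqueness from Theorem~\ref{thm:YH} on $L^\infty$). The treatment of the two special cases via the last claims of Lemma~\ref{lem:M} and Theorem~\ref{thm:f2} is likewise what the paper has in mind.
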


The following lists some basic properties of the K\"othe dual $\Y$.

\begin{lemma}\label{lem:polar}
We have
\begin{enumerate}
\item
$L^\infty\subseteq\Y$
\end{enumerate}
and, for each $p\in\P$
\begin{enumerate}[resume]
\item
there is a constant $c$ such that $c\|y\|_{L^1}\le p^\circ(y)$ for all $y \in L^1$,
\item
  $p^\circ(y')\le p^\circ(y)$ for every  $y',y\in L^1$ with $|y'|\le|y|$.
\item
  We have the ``H\"older's inequality''
\[
E[u\cdot y]\le p(u)p^\circ(y)
\]
and, conversely, if there is $c>0$ such that $c\|u\|_{L^1}\le p(u)$ for all $u$ and $p$ is lsc in $L^1$, then $p^\circ(y)<\infty$ whenever $E[u\cdot y]<\infty$ for all $u\in\dom p$.
\end{enumerate}
In particular,  $\Y$ is solid and decomposable.
\end{lemma}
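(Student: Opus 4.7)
The plan is to verify properties (i)--(iv) in turn; the closing sentence about solidity and decomposability then follows quickly.

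For (i), any $y\in L^\infty$ defines a continuous linear functional on $L^1$ via $u\mapsto E[u\cdot y]$, which by \ref{ax1} restricts to an element of $\U^*$. Theorem~\ref{thm:f2} then decomposes this functional as $u\mapsto E[u\cdot y']+\int u\,dm^s+\langle u,w\rangle$ with $y'\in\Y$, $m^s\in\M^s$, $w\in(L^\infty)^\perp$. Restricting to $L^\infty$ kills the $w$-term, so $E[u\cdot y]=E[u\cdot y']+\int u\,dm^s$ for $u\in L^\infty$; uniqueness in Theorem~\ref{thm:YH} applied to the countably additive measure $yP$ forces $y'=y$ and $m^s=0$, giving $y\in\Y$.

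For (ii) and (iii), fix $p\in\P$. For (ii), take $u:=\mathrm{sign}(y)$ componentwise as test element: it has $L^\infty$-norm bounded by a constant depending only on $n$, so \ref{ax2} gives $p(u)\le C$, and a routine norm-equivalence calculation yields $E[u\cdot y]\ge c''\|y\|_{L^1}$; rescaling $u$ by $1/C$ gives the stated bound. For (iii), given $u\in L^\infty$ with $p(u)\le 1$, define the componentwise rescaling $u'_i:=u_iy'_i/y_i$ on $\{y_i\ne 0\}$ and $u'_i:=0$ otherwise. Then $|y'|\le|y|$ gives $|u'|\le|u|$, so \ref{ax3} forces $p(u')\le 1$; since $u\cdot y'=u'\cdot y$ pointwise, we obtain $E[u\cdot y']=E[u'\cdot y]\le p^\circ(y)$, and taking the supremum yields $p^\circ(y')\le p^\circ(y)$.

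The forward direction of Hölder in (iv) is an immediate specialization: if $p^\circ(y)<\infty$ then $m:=yP$ belongs to $\M\cap\dom p^\circ$, so Lemma~\ref{lem:M} gives $E[u\cdot y]=\int u\,dm\le p(u)p^\circ(y)$; the case $p^\circ(y)=\infty$ is vacuous. The converse is the main technical step, and I argue by contrapositive. Assuming $p^\circ(y)=\infty$, pick $u^N\in L^\infty$ with $p(u^N)\le 1$ and $E[u^N\cdot y]\ge 4^N$, then use \ref{ax3} to replace $u^N$ by $\tilde u^N$ (componentwise sign-aligned with $y$) so that $\tilde u^N\cdot y\ge 0$ pointwise with $p(\tilde u^N)\le 1$ and $E[\tilde u^N\cdot y]\ge 4^N$. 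The partial sums $u^{(M)}:=\sum_{N=1}^M 2^{-N}\tilde u^N$ are Cauchy in $L^1$ because the hypothesis $c\|\cdot\|_{L^1}\le p$ gives $\|\tilde u^N\|_{L^1}\le 1/c$; let $u\in L^1$ be their limit. The lsc/Fatou property of $p$ gives $p(u)\le 1$, so $u\in\dom p$, while monotone convergence forces $E[u\cdot y]=\sum 2^{-N}E[\tilde u^N\cdot y]=\infty$, contradicting the assumption.

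The closing assertion then follows quickly: solidity of $\Y$ is a direct consequence of (iii) (if $|y'|\le|y|$ and $p^\circ(y)<\infty$ then $p^\circ(y')<\infty$), and decomposability combines (i), solidity and the linearity of $\Y$ inherited from $\M\subseteq\U^*$: $y1_A\in\Y$ by solidity, $\bar y1_{\Omega\setminus A}\in\Y$ by (i) and solidity, and hence so is their sum. The main obstacle in the whole proof is the converse of (iv); arranging the series-blow-up so that the partial sums converge in $L^1$ and the lower semicontinuity of $p$ passes to the limit is precisely where both the $L^1$-dominating bound $c\|\cdot\|_{L^1}\le p$ and the Fatou/lsc hypothesis are used essentially.
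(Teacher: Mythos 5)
Your proof is correct and follows essentially the same route as the paper: (ii) from \ref{ax2} via bounded sign test functions, (iii) from \ref{ax3} via a pointwise rearrangement of the test function, the H\"older inequality from Lemma~\ref{lem:M}, and the converse via the same series blow-up, using $c\|\cdot\|_{L^1}\le p$ to get $L^1$-convergence of the partial sums and lower semicontinuity of $p$ to keep the limit in $\dom p$. The only cosmetic difference is item (i), where the paper deduces $L^\infty\subseteq\Y$ directly from \ref{ax1} while you detour through Theorem~\ref{thm:f2} and the Yosida--Hewitt uniqueness; both are fine.
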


\begin{proof}
Assumption \ref{ax1} implies 1, and \ref{ax2} implies 2.
By \ref{ax3},
\begin{align*}
p^\circ(y')&= \sup_{u'\in L^\infty,u\in L^\infty} \left\{E [u'\cdot y']\midb |u'|\le|u|,\ p(u)\le 1\right\}\\
&= \sup_{u\in L^\infty} \left\{E [|u||y'|]\midb  p(u)\le 1\right\}\\
&\le \sup_{u\in L^\infty} \left\{E [|u||y|]\midb  p(u)\le 1\right\}\\
&=p^\circ(y),
\end{align*}
so 3 holds. To prove 4, the inequality in Lemma~\ref{lem:M} gives the H\"older's inequality. Assume now that $p^\circ(y)=+\infty$. Let $\alpha^\nu > 0$ be such that $\sum \alpha^\nu =1$. There exists $u^\nu$ with $p(u^\nu)\le 1$, $u^\nu\cdot y \ge 0$ and $E[u^\nu\cdot y]\ge 1/\alpha^\nu$. We have that $\sum_{\nu'=1}^\nu \alpha^{\nu'} u^{\nu'}$ converges to $u:=\sum \alpha^\nu u^\nu$ in $L^1$ and, since $p$ is lsc in $L^1$, $u \in \dom p$. By monotone convergence,
\[
E [u\cdot y] = \sum_{\nu=1}^\infty \alpha^\nu E[u^\nu\cdot y] = +\infty,
\]  
which completes the proof.
\end{proof}

\section{Examples}\label{sec:ex}

The following example is a direct application of Corollary~\ref{cor:f2}.

\begin{example}[The space of finite moments]\label{ex:fm}
The $L^p$-norms with $p\ge 1$ satisfy \ref{ax1}-\ref{ax5}, so, given an increasing sequence $S\subset[1,\infty)$, the space 
\[
\U:=\bigcap_{p\in S}L^p
\]
is Fr\'echet space and its dual may be identified with  
\[
\Y:=\bigcup_{p\in S}L^p
\]
under the bilinear form $\langle u,y\rangle=E[u\cdot y]$. When $S$ is unbounded, $\U$ is the space of measurable functions with finite moments while if $\sup S=p$ with $p\notin S$, $\U$ is the space of measurable functions with moments strictly less than $p$.
\end{example}

Given a set $C$ in a linear space, we will use the notation
\[
\pos C := \bigcup_{\alpha>0}(\alpha C)\quad\text{and}\quad
C^\infty := \bigcap_{\alpha>0}(\alpha C).
\]
The following construction, inspired by the Luxemburg norm in the theory of Orlicz spaces, turns out to be convenient. 
\begin{example}\label{ex:lux}
Let $H:L^1\to\ereals_+$ be lsc convex such that $H(0)=0$ and
\begin{enumerate}[label =  (H\arabic*), ref=(H\arabic*)]
\item\label{H1} there is a constant $c>0$ such that $H(u)\le 1$ implies $\|u\|_{L^1}\le c$,
\item\label{H2} $L^\infty\subset\pos (\dom H)$,
\item\label{H3} $H(u_1)\le H(u_2)$ whenever $|u_1|\le|u_2|$.
\end{enumerate}
The function
\[
p(u):=\inf\{\beta>0\mid H(u/\beta)\le 1\}
\]
is lsc, symmetric and sublinear. Let $\P=\{p\}$ and $\U=\dom p$. Assumptions \ref{ax1}--\ref{ax3} hold and, in particular, $\U$ is a Banach space with dual
\[
\U^*=\M\oplus (L^\infty)^\perp,
\]
where 
\[ 
\M=\pos\dom H^*
\]
with $H^*:\M^1\to\ereals$ given by
\[
H^*(m):=\sup_{u\in L^\infty}\{\int_\Omega udm-H(u)\}.
\]
For any $m\in\M^1$,
\[
p^\circ(m)=\sup_{u\in L^\infty}\{\int_\Omega udm\mid H(u)\le 1\} = \inf_{\beta>0}\{\beta H^*(m/\beta)+\beta\},
\]
restriction of $p^\circ$ to $\M$ is the polar of $p$ and
\[
\|m\|_{H^*}\le p^\circ(m)\le 2\|m\|_{H^*},
\]
where
\[
\|m\|_{H^*} :=\inf\{\beta>0\mid H^*(m/\beta)\le 1\}.
\]
Assume now that $L^\infty\subseteq\dom H$. If
\begin{enumerate}[resume,label =  (H\arabic*), ref=(H\arabic*)]
\item\label{H4} $H(u^\nu)\downto 0$ whenever $(u^\nu)_{\nu=1}^\infty\subset L^\infty$ with $|u^\nu|\downto 0$ almost surely,
\end{enumerate}
then \ref{ax4} holds so $\M^s=\{0\}$ and the dual of the closure $\tilde\U$ of $L^\infty$ in $\U$ can be identified with $\Y$. If
\begin{enumerate}[resume,label =  (H\arabic*), ref=(H\arabic*)]
\item\label{H5} $H(u^\nu)\downto 0$ whenever $(u^\nu)_{\nu=1}^\infty\subset \dom H$  with $|u^\nu|\downto 0$ almost surely,
\end{enumerate}
then $\tilde\U = (\dom H)^\infty$. In particular, $\U=\tilde \U$ if $\dom H$ is a cone.
\end{example}

\begin{proof}
Let $u^\nu\to u$ in $L^1$ be such that $p(u^\nu)\le\alpha$ or, in other words, $H(u^\nu/\alpha)\le 1$. Thus lower semicontinuity of $H$ implies that of $p$. It is clear that \ref{H1} implies \ref{ax1}. By \ref{H2},  $p$ is finite on $L^\infty$. Since $p$ is lsc on $L^1$, it is lsc on $\sigma(L^\infty,L^1)$. Thus, by \cite[Corollary~8B]{roc74}, $p$ is continuous in $L^\infty$ and thus \ref{ax2} holds. Assumption~\ref{ax3} is clear from \ref{H3}.

Let $m\in\M^1$. Since the infimum in the definition of the Luxemburg norm is attained,
\begin{align*}
p^\circ(m) &=\sup_{u\in L^\infty}\{\int_\Omega udm \,|\,p(u)\le 1\}=\sup_{u\in L^\infty}\{\int_\Omega udm\mid H(u)\le 1\}.
\end{align*}
Lagrangian duality gives
\begin{align*}
p^\circ(m) &= \inf_{\beta>0}\sup_{u\in L^\infty}\{\int_\Omega udm-\beta H(u)+\beta\} = \inf_{\beta>0}\{\beta H^*(m/\beta)+\beta\}.
\end{align*}

Clearly,
\[
p^\circ(m)\le\inf_{\beta>0}\{\beta H^*(m/\beta)+\beta\mid H^*(m/\beta)\le 1\}\le 2\inf\{\beta>0\mid H^*(m/\beta)\le 1\}.
\]
On the other hand, we have
\[
p^\circ(m) = \inf_{\beta>0}\{\beta H^*(m/\beta)+\beta\} = \inf_{\alpha>0}\frac{g(\alpha m)}{\alpha},
\]
where $g(m)=H^*(m)+1$. Since $H^*\ge 0$, we have $g\ge\|\cdot\|_{H^*}$ when $\|m\|_{H^*}\le 1$. When $\|m\|_{H^*}>1$, convexity and the fact that $H^*(0)=0$ give
\[
H^*(m/\|m\|_{H^*})\le H^*(m)/\|m\|_{H^*}.
\]
By definition of $\|m\|_{H^*}$, the left side equals $1$ so $\|m\|_{H^*}\le H^*(m)\le g(m)$. Thus,
\[
p^\circ(m) \ge \inf_{\alpha>0}\frac{\|\alpha m\|_{H^*}}{\alpha} = \|m\|_{H^*}.
\]

If \ref{H4} holds and $|u^\nu|\downto 0$ almost surely in $L^\infty$, then for all $\beta>0$,
\[
H(u^\nu/\beta)\downto 0
\]
so $p(u^\nu)\downto 0$. In particular, \ref{ax4} holds.

To prove the last claim, let $u\in(\dom H)^\infty$, $u^\nu:= u\one_{|u|\le\nu}$ and $\beta>0$. By \ref{H3}, $u-u^\nu=u1_{\Omega\setminus \{|u|\le \nu\}}\in\beta\dom H$ so \ref{H5} implies
\[
H((u-u^\nu)/\beta)\downto 0.
\]
Since $\beta>0$ was arbitrary, we get $p(u-u^\nu)\downto 0$ so $(\dom H)^\infty\subseteq \tilde \U$. To prove the converse, it remains to show that $(\dom H)^\infty$ is closed in $\U$. If $(u^\nu)$ is in $(\dom H)^\infty$ and converges to $u\in \tilde\U$, we have for any $\beta>0$, 
\begin{align*}
H(u/(2\beta))\le \frac{1}{2}H(u^\nu/\beta)+\frac{1}{2}H((u-u^\nu)/\beta) \le \frac{1}{2}H(u^\nu/\beta)+\frac{1}{2}
\end{align*}
for $\nu$ large enough, so $H(u/2\beta)<\infty$ and thus $u\in(\dom H)^\infty$.
\end{proof}

Musielak--Orlicz spaces are generalizations of Orlicz spaces where the associated Young function $\Phi$ is allowed to be random in the sense that it is a function on $\reals\times\Omega$ such that
\[
\omega\mapsto\{(\xi,\alpha)\mid \Phi(\xi,\omega)\le\alpha\}
\]
is a convex-valued measurable mapping; see \cite[Chapter~14]{rw98}. If $\Phi$ only takes finite real values, this happens exactly when $\Phi(\xi,\cdot)$ is measurable for every $\xi\in\reals$ and $\Phi(\cdot,\omega)$ is convex for every $\omega\in\Omega$. The dual of a Musielak--Orlicz space can be characterized in terms of the conjugate function defined by
\[
\Phi^*(\eta,\omega) = \sup_{\xi\in\reals}\{\xi\eta - \Phi(\xi,\omega)\}.
\]
The measurability condition on $\Phi$ implies the same property for $\Phi^*$; see \cite[Theorem~14.50]{rw98}.

\begin{example}[Musielak-Orlicz spaces]\label{ex:mo}
Let $\Phi:\reals\times\Omega\to\ereals_+$ be nonzero random symmetric convex function with $\Phi(0)=0$ and such that $\Phi(a,\cdot),\Phi^*(a,\cdot)\in L^1$ for some constant $a>0$. Endowed with the Luxemburg norm
\[
\|u\|_\Phi :=\inf\{\beta>0 \mid E\Phi(|u|/\beta)\le 1\},
\]
$L^\Phi:=\{u\in L^1\mid \|u\|_\Phi<\infty\}$ is a Banach space. The dual of $L^\Phi$ is
\[
(L^\Phi)^*=L^{\Phi^*}\oplus \M^s\oplus(L^\infty)^\perp,
\]
where
\[
\M^{s}=\{m\in M^{1s}\mid \sigma_\Phi(m)<\infty\}
\]
with $\sigma_\Phi(m) :=\sup_{u\in L^\infty}\{ \int_\Omega udm \mid E\Phi(|u|)<\infty\}$. For any $y+m^s \in L^{\Phi^*}\oplus\M^s$, the dual norm can be expressed as
\begin{align*}
\|y+m^s\|_{\Phi}^* &= \sup_{u\in L^\infty}\{E[u\cdot y]+\int_\Omega udm^s\mid E\Phi(|u|)\le 1\}\\
 &= \inf_{\beta>0}\{\beta E\Phi^*(|y|^*/\beta)+\beta\}+\sigma_{\Phi}(m^s),
\end{align*}
we have
\[
\|y\|_{\Phi^*}\le \|y\|_{\Phi}^*\le 2\|y\|_{\Phi^*}\quad\forall y\in L^{\Phi^*},
\]
and the dual of the closure $M^\Phi$ of $L^\infty$ in $L^\Phi$ is
\[
(M^\Phi)^*=L^{\Phi^*}\oplus\M^s.
\]

Assume now that $\Phi(a,\cdot)\in L^1$ for all $a>0$. Then, $\M^s=\{0\}$, $M^\Phi$ coincides with the Morse heart 
\[
(\dom E\Phi)^\infty = \{\xi\in L^1 \mid E\Phi(|\xi|/\beta)< \infty\quad \forall\beta>0\},
\]
and, in particular, $L^\Phi=M^\Phi$ if $\dom E\Phi$ is a cone. 
\end{example}

\begin{proof}
We apply Example~\ref{ex:lux} to $H(u):=E\Phi(|u|)$. By \cite[Theorem~14.60]{rw98}, 
\[
H(u) = \sup_{\eta\in L^\infty}E\{[|u|\eta] - \Phi^*(\eta)\},
\]
so $H$ is $L^1$-lsc. This also gives
\[
H(u)\ge a\|u\|_{L^1} - E\Phi^*(a)
\]
so $\Phi^*(a)\in L^1$ implies \ref{H1}. The assumption $\Phi(a)\in L^1$ implies that $H(u)<\infty$ when $\|u\|_{L^\infty}\le a$ so \ref{H2} holds. Property \ref{H3} holds since $\Phi$ is increasing. By \cite[Theorem~1]{roc71} and \cite[Theorem~15.3]{roc70a},
\[
H^*(m)=\sup_{u\in L^\infty}\{\int udm-Eh(u)\}= E\Phi^*(|y|^*)+\sigma_\Phi(m^s).
\]
If $\Phi(a)\in L^1$ for all $a>0$, then $L^\infty\subset\dom H$ and \ref{H4} and \ref{H5} hold by monotone convergence theorem. Thus all the claims follow from Example~\ref{ex:lux}.
\end{proof}
In \cite{ml19}, the assumption $\Phi(a,\cdot)\in L^1$ for all $a>0$ is called "local integrability". Thus we recover \cite [Theorem 2.4.4]{ml19} for probability spaces. Our characterization of the dual without local integrability seems new.

\begin{example}[Risk measures]\label{ex:rm}
Let $\rho:L^1\to\ereals$ be a ``convex risk measure'' in the sense that it is 
convex, nondecreasing, $\rho(0)=0$ and $\rho(\xi+\alpha)=\rho(\xi)+\alpha$ for all $\xi\in L^1$ and $\alpha\in\reals$. Assume that $n=1$, $\rho$ is $L^1$-lsc and that there is a constant $c>0$ such that $\rho(|u|)\le 1$ implies $\|u\|_{L^1}\le c$.

Endowed with the norm
\[
\|u\|_\rho := \inf\{\beta>0\mid\rho(|u|/\beta)\le 1\},
\]
$L^\rho:=\{u\in L^1\mid \rho(|u|)<\infty\}$ is a Banach space whose dual can be identified with $\M\oplus (L^\infty)^\perp$, where
\[ 
\M=\{m\in\M^1 \mid \exists \beta>0: \alpha(|m|/\beta)<\infty\}
\]
with $\alpha:\M^1\to\ereals$ defined by
\[
\alpha(m):=\sup_{\xi\in L^\infty_+}\{\int_\Omega \xi dm -\rho(\xi)\}.
\]
For any $m\in\M$, the dual norm can be expressed as
\[
\|m\|_\rho^* = \sup_{u\in L^\infty}\{\int_\Omega udm \mid \rho(u)\le 1\} = \inf_{\beta>0}\{\beta \alpha(|m|/\beta)+\beta\},
\]
and
\[
\|m\|_{\alpha}\le \|m\|_\rho^*\le 2\|m\|_{\alpha},
\]
where
\[
\|m\|_{\alpha} :=\inf\{\beta>0\mid \alpha(|m|/\beta)\le 1\}.
\]

\begin{enumerate}
\item If $\rho$ has the Lebesgue property on $L^\infty$: $\rho(\xi^\nu)\downto 0$ for any decreasing sequence $(\xi^\nu)\subset L^\infty$ with $\xi^\nu\downto 0$ almost surely,
\end{enumerate}
 then the dual of the closure $\tilde L^\rho$ of $L^\infty$ in $L^\rho$ can be identified with 
\[
L^{\alpha}:=\{y\in L^1 \mid \exists \beta>0: \alpha(|y|/\beta)<\infty\}.
\]

\begin{enumerate}[resume]
\item If $\rho$ has the Lebesgue property on $\dom\rho$: $\rho(\xi^\nu)\downto 0$ for any decreasing sequence $(\xi^\nu)\subset\dom\rho$ with $\xi^\nu\downto 0$ almost surely,
\end{enumerate}
 then 
\[
\tilde L^\rho= \{u\in L^1\mid \rho(|u|/\beta)<\infty\ \forall\beta>0\},
\] 
and, in particular, $L^\rho=\tilde L^\rho$ if $\dom\rho$ is a cone.

\end{example}

\begin{proof}
We apply Example~\ref{ex:lux} to the function $H(u):=\rho(|u|)$. By assumption, \ref{H1} and \ref{H3} hold. By monotonicity and translation invariance, $\rho(|u|)\le \rho(\|u\|_{L^\infty})=\|u\|_{L^\infty}$, so $L^\infty\subset \dom H$. In particular, \ref{H2} holds. The conditions \ref{H4} and \ref{H5} in Example~\ref{ex:lux}  translate directly to those of 1 and 2. Thus the claims follow from Example~\ref{ex:lux}, since here
\begin{align*}
H^*(m) &:=\sup_{u\in L^\infty}\{\int udm-\rho(|u|)\}\\
&=\sup_{u\in L^\infty,\xi\in L^\infty_+}\{\int u\xi dm-\rho(\xi) \mid |u|=1\}\\
&=\sup_{\xi\in L^\infty_+}\{\int \xi d|m|-\rho(\xi)\}\\
&=\alpha(|m|),
\end{align*}
where the second last equality follows from \cite[Theorem 2.3]{yh52} and the fact that $\nu(A):=\int_A \xi dm$ is a finitely additive measure with $|\nu|(A)= \int_A\xi d|m|$.
%
\end{proof}

Given $u\in L^1$, let
\[
n_u(\tau):=E1_{\{|u|>\tau\}}
\]
and
\[
q_u(t):=\inf\{\tau\in\reals\mid n_u(\tau)\le t\}.
\]
Note that $\tau\mapsto 1-n_u(\tau)$ is the cumulative distribution function of $|u|$ and that $q_u$ is an inverse of $n_u$. Both $n_u$ and $q_u$ are nonincreasing.

\begin{lemma}\label{lem:cvar}
We have
\[
\int_0^t q_u(t)dt=\inf_{s\in\reals^+}\{ts + E[|u|-s]^+\}.
\]
\end{lemma}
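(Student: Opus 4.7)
The plan is to express both sides of the identity in a common form via Fubini's theorem and the Galois-type duality $n_u(\tau) > r \iff \tau < q_u(r)$, which follows from the monotonicity and right-continuity of $n_u$ (the latter being a consequence of countable additivity of $P$ applied to $\{|u| > \tau_k\} \uparrow \{|u| > \tau\}$ for $\tau_k \downto \tau$). Once this duality is in hand, the identity reduces to a pointwise comparison of integrands.

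First I would apply the layer-cake formula to rewrite
\[
E[|u|-s]^+ = \int_s^\infty P(|u|>\tau)\, d\tau = \int_s^\infty n_u(\tau)\, d\tau,
\]
and then swap the order of integration using $n_u(\tau) = \int_0^1 \one_{r < n_u(\tau)}\, dr$ together with the Galois identity above:
\[
E[|u|-s]^+ = \int_0^1 \int_s^\infty \one_{\tau < q_u(r)}\, d\tau\, dr = \int_0^1 [q_u(r)-s]^+\, dr.
\]
Writing $ts = \int_0^t s\, dr$ and splitting the resulting integral at $r=t$ then yields
\[
ts + E[|u|-s]^+ = \int_0^t \bigl(s + [q_u(r)-s]^+\bigr)\, dr + \int_t^1 [q_u(r)-s]^+\, dr.
\]

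The identities $s + [q_u(r)-s]^+ = \max(s, q_u(r)) \ge q_u(r)$ and $[q_u(r)-s]^+ \ge 0$ give $ts + E[|u|-s]^+ \ge \int_0^t q_u(r)\, dr$ for every $s \in \reals^+$. For the reverse direction I would substitute $s^* := q_u(t)$: since $q_u$ is nonincreasing, $q_u(r) \ge s^*$ for $r \in [0,t]$ and $q_u(r) \le s^*$ for $r \in (t,1]$, so both pointwise inequalities become equalities and the lower bound is attained.

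The only step requiring care is the Galois identity $\one_{r < n_u(\tau)} = \one_{\tau < q_u(r)}$, equivalently $\{n_u \le r\} = [q_u(r), \infty)$; this pins down whether endpoints of the superlevel sets belong or not, and hence validates the Fubini computation. Everything else is routine.
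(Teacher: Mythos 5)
Your proof is correct but takes a genuinely different route from the paper's. The paper cites Rockafellar (Theorems~23.5 and 24.2) to assert that $t\mapsto\int_0^t q_u(s)\,ds$ and $s\mapsto-\int_s^\infty n_u(\tau)\,d\tau$ form a concave conjugate pair (being integrals of mutually inverse monotone functions), identifies the latter with $-E[|u|-s]^+$ by the same layer-cake/Fubini computation you use, and then concludes by the biconjugate theorem. You instead push Fubini one step further: the Galois relation $n_u(\tau)>r\iff\tau<q_u(r)$ turns $E[|u|-s]^+$ into $\int_0^1[q_u(r)-s]^+\,dr$, after which the variational formula follows from the pointwise identities $s+[q_u(r)-s]^+=\max(s,q_u(r))\ge q_u(r)$ and $[q_u(r)-s]^+\ge 0$, with equality at the explicit minimizer $s^*=q_u(t)$. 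This is more elementary (no conjugacy machinery) and yields the extra information that the infimum is attained at the quantile $q_u(t)$, i.e.\ the familiar expected-shortfall representation; the paper's route is shorter on the page but outsources the real work to the cited conjugacy theorems. You correctly isolate the one delicate point: the Galois relation needs $\{\tau\mid n_u(\tau)\le r\}=[q_u(r),\infty)$, which follows from monotonicity together with right-continuity of $n_u$, and your derivation of the latter from continuity from below of $P$ is sound. The only caveat worth recording is that the attainment argument presumes $q_u(t)<\infty$; this holds for every $t>0$ because $u\in L^1$, and at $t=0$ both sides vanish (the right-hand side by letting $s\to\infty$, using dominated convergence), so the stated identity holds there too even though the infimum need not be attained.
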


\begin{proof}
By Theorems~23.5 and 24.2 of \cite{roc70a}, the functions
\[
f(t) := \int_0^tq_u(s)ds
\]
and
\[
f^*(s) = \int_0^sn_u(\tau)d\tau - \int_0^\infty n_u(\tau)d\tau = -\int_s^\infty n_u(\tau)d\tau
\]
are concave and conjugate to each other. By Fubini,
\[
f^*(s) = -E\int_s^\infty 1_{\{|u|>\tau\}}d\tau = -E[|u|-s]^+
\]
so
\[
\int_0^tq_u(s)ds =\inf_{s\in\reals^+}\{ts + E[|u|-s]^+\},
\]
 by the biconjugate theorem (see e.g.~\cite[Theorem~12.2]{roc70a}).
\end{proof}
Recall that a probability space is {\em resonant} if it is atomless or completely atomic with all atoms having equal measure.

\begin{remark}
Assume that $(\Omega,\F,P)$ is resonant. Every rearrangement invariant seminorm $p$ satisfies the ``Jensen's inequality''
\[
p(E^\G u)\le p (u)  \quad\forall u\in\U
\]
for every sub sigma-algebra $\G\subset\F$. Here $E^\G u$ is the conditional expectation of $u$. However, rearrangement invariance is not necessary for the Jensen's inequality to hold for every sub-$\sigma$-algebra $\G$.
\end{remark}

\begin{proof}
By Jensen's inequality, $E|E^\G u|\le E|u|$, so Lemma~\ref{lem:cvar} implies 
\[
\int_0^t q_{E^\G u}(s) ds\le \int_0^t q_{u}(s) ds.
\]
By Hardy's lemma (\cite[Proposition 2.3.6]{bs88}),
\[
\int q_{E^\G u}(s) q_y(s) \le \int q_{u}(s) q_y(s)
\]
for any $y\in L^1$. Thus the claim follows from \cite[Corollary 2.4.4]{bs88}.

As to the necessity, let $\F=\{\emptyset,A,A^C,\Omega\}$, where $P(A)=P(A^C)=1/2$. Then the only strict sub-$\sigma$-algebra $\G$ of $\F$ is the trivial one. Let 
\[
p(u) :=\max\{ E|u|, E[1_A |u|/P(A)\}.
\]
Note that $ E[1_A |E^\G u|/P(A) =E[E^\G 1_A |E^\G u|/P(A) =E[|E^\G u|$ so that 
\begin{align*}
p(E^\G \eta) &= E|E^\G u| \le E|u| \le p(u),
\end{align*}
and $p$ satisfies the $\G$-conditional Jensen's inequality for every $\G\subset \F$. However, $p(1_A)=1$ while $p(1_{A^C})=1/2$, so $p$ is not rearrangement invariant.
\end{proof}

\begin{example}[Lorentz and Marcinkiewicz spaces]\label{ex:ml}
Assume that $(\Omega,\F,P)$ is resonant. Given a nonnegative concave increasing function $\phi$ on $[0,1]$ with $\phi(0)=0$, the associated {\em Marcinkiewicz space} is the linear space $M_\phi$ of $u\in L^1$ with
\[
\|u\|_\phi := \sup_{t\in(0,1]}\left\{\frac{1}{\phi(t)}\int_0^tq_u(s)ds\right\} <\infty.
\]
The function $\|\cdot\|_\phi$ is a norm and $M_\phi$ is a Banach space. If $\lim_{t\searrow 0}t/\phi(t)>0$, we have $M_\phi=L^\infty$. Assume now that $\lim_{t\searrow 0}t/\phi(t)=0$. The topological dual of $M_\phi$ is
\[
M_\phi^*=\Lambda_\Phi\oplus (L^\infty)^\perp,
\]
where $\Lambda_\Phi$ is the {\em Lorentz space}
\begin{align*}
\Lambda_\phi:=\{y\in L^1\mid \|y\|^*_\phi<\infty \},
\end{align*}
where 
\[
\|y\|^*_\phi := \int_0^1q_y(t)d\phi(t).
\]
The closure of $L^\infty$ in $M_\phi$ can be expressed as
\[
\tilde M_\phi =\{u\in L^1\mid \lim_{t\searrow 0} \frac{1}{\phi(t)}\int_0^t q_u(s)ds=0\}.
\]
The topological dual of  $\tilde M_\phi$ is $\Lambda_\Phi$ and the topological dual of $\Lambda_\phi$ is $M_\phi$.
\end{example}

\begin{proof}
By Lemma~\ref{lem:cvar},
\[
u\mapsto \int_0^t q_u(t)dt
\]
is the infimal projection of a sublinear function of $s$ and $u$ and thus, sublinear in $u$. It is also continuous in $L^1$. It follows that $\|\cdot\|_\phi$ is sublinear, symmetric and lsc in $L^1$.

Since
\[
\|u\|_\phi\ge \phi(1)\int_0^1 q_u(s)ds =\phi(1)E[|u|],
\]
\ref{ax1} holds. By Remark~\ref{rem:complete}, $M_\phi$ is Banach. Since $q_u\le\|u\|_{L^\infty}$, we have
\[
\|u\|_\phi\le \sup_{t\in(0,1]}\frac{t}{\phi(t)} \|u\|_{L^\infty},
\]
where $\sup_{t\in(0,1]}\frac{t}{\phi(t)}<\infty$ since $\phi$ is concave and strictly positive for $t>0$. Thus, \ref{ax2} holds. Property \ref{ax3} is clear. Given $A\in\F$, 
\[
\|1_A\|_\phi = \sup_t \frac{1}{\phi(t)} \min\{t,P(A)\} = \frac{P(A)}{\phi(P(A))},
\]
since $t\mapsto \frac{t}{\phi(t)}$ is increasing by concavity. Thus $\hat\phi_p(t):=\frac{t}{\phi(t)}$ is the fundamental function of $M_\phi$. By Remark~\ref{rem:ri},  $M_\phi=L^\infty$ if $\lim_{t\searrow 0}t/\phi(t)>0$ while \ref{ax4} holds if $\lim_{t\searrow 0}t/\phi(t)=0$.
 We have
\begin{align*}
\|y\|^*_\phi &= \sup_{u\in L^1}\{E[uy]\mid\|u\|_\phi\le 1\}\\
&= \sup_{u\in L^1}\{\int_0^1 q_u(t)q_y(t)dt\mid \int_0^tq_u(s)ds\le\phi(t)\ \forall t\in[0,1]\}\\
&=\int_0^1q_y(t)\phi'(t)dt\\
&= \int_0^1q_y(t)d\phi(t),
\end{align*}
where the second equality follows from \cite[Corollary 2.4.4]{bs88} and the third from Hardy's lemma \cite[Proposition~2.3.6]{bs88}. The representation of the topological dual of $M_\phi$ now follows from Theorem~\ref{thm:f2}.


If $u\in L^\infty$, $q_u$ is bounded, so  
\[
\lim_{t\searrow 0} \frac{1}{\phi(t)}\int_0^tq_u(s)ds = \lim_{t\searrow 0} \frac{t}{\phi(t)}\frac{1}{t}\int_{[0,t]} q_u(s)ds =0,
\]
by assumption. Thus, $L^\infty\subset\tilde M_\phi$. Let $u\in M_\phi$ and $\tilde M_\phi$. We have $q_{u+\tilde u}(s^1+s^2)\le q_{u}(s^1)+q_{\tilde u}(s^2)$, so
\begin{align*}
\lim_{t\searrow 0} \frac{1}{\phi(t)}\int_0^t q_{u}(s)ds &\le \lim_{t\searrow 0} \frac{1}{\phi(t)}\int_0^t (q_{u-\tilde u}(s/2) + q_{\tilde u}(s/2))ds\\
&=\lim_{t\searrow 0} \frac{1}{\phi(t)}\int_0^t q_{u-\tilde u}(s/2)ds\\
&=\lim_{t\searrow 0} \frac{2}{\phi(t)}\int_0^{2t} q_{u-\tilde u}(s)ds\\
&\le\lim\frac{1}{\phi(2t)}\int_0^{2t} q_{u-\tilde u}(s)ds\\
&\le\|u-\tilde u\|_{\phi},
\end{align*}
where the second last inequality follows from concavity of $\phi$. Thus, $\tilde M_\phi$ is closed in $M_\phi$ so $\tilde M_\phi$ contains the closure of $L^\infty$. To prove the converse, let $u\in\tilde M_\phi$ and $u^\nu=u1_{\{|u|\le \nu\}}$. We have $q_{u-u^\nu}(t)=0$ for $t\ge t^\nu :=P(|u|\ge \nu)$ while $q_{u-u^\nu}(t)=q_u(t)$ for $t<t^\nu$. Thus,
\begin{align*}
\|u-u^\nu\|_\phi &=\sup_{t\in[0,1]}\left\{\frac{1}{\phi(t)}\int_0^tq_{u-u^\nu}(s)ds\right\} = \sup_{t\in[0,t^\nu]}\left\{\frac{1}{\phi(t)}\int_0^tq_u(s)ds\right\}.
\end{align*}
Since $u\in\tilde M_\phi$, this converges to $0$ as $\nu\to\infty$. Thus, $\tilde M_\phi$ is the closure of $L^\infty$ in $M_\phi$.

By Lemma~\ref{lem:polar}, the Lorentz seminorm satisfies \ref{ax1}-\ref{ax3}. If $y^\nu\downto 0$ with $\|y^\nu\|_\phi^*<\infty$, we have $q_{y^\nu}\downto 0$, so by monotone convergence, $\|y^\nu\|_\phi^*\downto 0$. Thus, the Lorenz norm satisfies \ref{ax5}. The fact that the topological dual of $\Lambda_\phi$ is $M_\phi$ now follows from Theorem~\ref{thm:f2} and the fact that, by the bipolar theorem, $p$ is the polar of $p^\circ$.
\end{proof}

Much like in Example~\ref{ex:fm}, one can characterize topological duals of locally convex (resp.\ Fr\'echet) spaces obtained by intersecting Markinkiewich spaces associated with a (resp\ countable) collection of nonnegative concave increasing functions $\phi$.

\begin{example}[Generalized Orlicz-spaces]
Let $\Phi$ be as in Example~\ref{ex:mo} with $\dom\Phi=\reals$ and let $r$ be a sublinear symmetric lsc function on $L^1$ satisfying \ref{ax1}--\ref{ax4}. Endowed with the norm
\[
\|u\|_{\Phi,r} := \inf\{\beta>0\mid r(\Phi(|u|/\beta)\le 1)\},
\]
$\U:=\{u\in L^1\mid \|u\|_{\Phi,r}<\infty\}$ is a Banach space with dual
\[
\U^*= \Y\oplus (L^\infty)^\perp ,
\]
where $\Y :=\{y\in L^1\mid \|y\|_{\Phi,r}^*<\infty\}$ with 
\[
\|y\|^*_{\Phi,r} = \inf_{v\in L^1}\{E[v\Phi^*(y/v)] + r^\circ(v)\}.
\]
Moreover,
\[
\|y\|_{H^*} \le \|y\|^*_{\Phi,r} \le 2\|y\|_{H^*}, 
\]
where
\begin{align*}
  \|y\|_{H^*} &= \inf\{\beta>0\mid H^*(y/\beta)\le 1\} =\inf_{v\in L^1}\max\{r^\circ(v),E[v\Phi^*(y/v)]\}.
\end{align*}

If $r$ satisfies \ref{ax5}, then the closure of $L^\infty$ in $\U$ has the expression 
\[
\tilde \U=\{u\in L^1\mid r(\Phi(|u|/\beta))<\infty\ \forall\beta>0\}.
\]
In this case, $\tilde\U=\U$ if $\dom H$ is a cone. In particular, $\dom H$ is a cone if $\Phi$ satisfies $\Delta_2$-condition: there exists $K>0$ and $x_0$ such that $\Phi(2x)\le K\Phi(x)$ for all $x\ge x_0$.
\end{example}

\begin{proof}
This fits Example~\ref{ex:lux} with
\[
H(u) :=\begin{cases}
r(\Phi(|u|))\quad&\text{if } \Phi(|u|)\in L^1,\\
+\infty\quad&\text{otherwise}.
\end{cases}
\]
For every $u\in L^1$,
\[
H(u)=\sup_{\eta\in L^\infty_+}\{E[\eta\Phi(|u|)]-r^*(\eta)\},
\]
so $H$ is lsc in $L^1$. Since $r$ satisfies \ref{ax1}--\ref{ax4}, $H$ satisfies \ref{H1}--\ref{H4}.

We compute the conjugate of $H$ by employing conjugate duality; see \cite{roc74}. Let $F(x,u):=r(\Phi(u)+x)$ be defined on $L^\infty\times L^\infty$. The conjugate $F^*$ on $L^1\times L^1$ has the expression
\begin{align*}
F^*(v,y) &:=\sup_{u,x\in L^\infty}\{E[xv+uy]-r(\Phi(u)+x)\}\\
&=\sup_{u,x\in L^\infty}\{E[vx-v\Phi(u)+uy]-r(x)\}\\
&=E[v\Phi^*(y/v)] + \delta_{B^*}(v),
\end{align*}
where the last equality comes from the interchange rule \cite[Theorem 14.60]{rw98} and
\[
B^*:=\{v\in L^1\mid r^\circ(v)\le 1\}.
\]
Since $r$ satisfies \ref{ax4}, it is $\tau(L^\infty,L^1)$-continuous?. By \cite[Theorem~17]{roc74}, this implies
\begin{align*}
H^*(y) = \inf_{v\in L^1} F^*(y,v) = \inf_{v\in L^1}\{E[v\Phi^*(y/v)]\mid r^\circ(v)\le 1\}
\end{align*}
so, by Example~\ref{ex:lux},
\begin{align*}
\|y\|_{\Phi,r}^* &= \inf_{\beta>0}\{\beta H^*(y/\beta)+\beta\}\\
&= \inf_{\beta>0,v\in L^1}\{E[\beta v\Phi^*(y/(\beta v))] + \beta\mid r^\circ(v)\le 1\}\\
&= \inf_{\beta>0,v\in L^1}\{E[v\Phi^*(y/v)] + \beta\mid r^\circ(v)\le\beta\}\\
&= \inf_{v\in L^1}\{E[v\Phi^*(y/v)] + r^\circ(v)\}.
\end{align*}
The claims concerning the dual space and its norm  follow from Example~\ref{ex:lux}. We have
\begin{align*}
  \|y\|_{H^*} &:= \inf\{\beta>0\mid H^*(y/\beta)\le 1\}\\
&= \inf\{\beta>0\mid \exists v\in L^1: r^\circ(v)\le 1,\ E[v\Phi^*(y/(\beta v))]\le 1\}\\
  &= \inf\{\beta>0\mid \exists v\in L^1: r^\circ(v)\le\beta,\ E[v\Phi^*(y/v)]\le \beta\}\\
  &=\inf_{v\in L^1}\max\{r^\circ(v),E[v\Phi^*(y/v)]\}.
\end{align*}

Assume now that $r$ satisfies \ref{ax5}. Then $H$ satisfies \ref{H5}, so Example~\ref{ex:lux} gives
\[
\tilde \U=(\dom H)^\infty.
\]
The set on the right can be written as $\{u\in L^1\mid r(\Phi(u/\beta)<\infty\ \forall\beta>0\}$.
\end{proof}

Note that if $r$ is the $L^\infty$-norm, we simply have $\U=L^\infty$ and $\Y=L^1$ while if $r$ is the $L^1$-norm, then we are back in Musielak-Orlicz spaces of Example~\ref{ex:mo}. If $\Phi$ is nonrandom and $r$ is the Lorentz-norm associated with a concave function $\phi$ (see Example~\ref{ex:ml}), $\U$ becomes the Orlicz-Lorentz-space studied e.g.\ in \cite{hkm02}. In this case the above expressions for the dual norm seem new. One could also take $r$ the Marcinkiewicz norm in which case $r^\circ$ is the Lorentz-norm. This setting seems new.

\section{On necessity of the assumptions}\label{sec:nec}

This section goes beyond Banach and Fr\'echet spaces. We assume that $\U$ and $\Y$ are solid decomposable spaces (see Remark~\ref{rem:decomp}) of random variables in separating duality under the bilinear form
\[
\langle u,y\rangle := E[u\cdot y].
\]

Clearly, solid spaces are decomposable but there are decomposable spaces that are not solid.
\begin{example}\label{ex:decomp}
Let $(\Omega,\F) :=([0,1],\B([0,1]))$, $u(\omega) :=\omega^{-\frac{1}{4}}+\omega^{-\frac{1}{2}}$ and $\U:=L^\infty +Lin (u1_A\mid A\in\F)$. Then $\U$ is decomposable, by construction,  but not solid, since it does not contain $\bar u(\omega) =\omega^{-\frac{1}{4}}$ for which $0<\bar u< u$.
\end{example}

The following two lemmas do not require solidity of $\U$ or $\Y$. The first one is Lemma~6 from \cite{pp12}. 
\begin{lemma}\label{lem:rel}
We have $L^\infty\subseteq\U\subseteq L^1$ and
\begin{align*}
\sigma(L^1,L^\infty)|_\U&\subseteq\sigma(\U,\Y),\quad\sigma(\U,\Y)|_{L^\infty}\subseteq\sigma(L^\infty,L^1),\\
\tau(L^1,L^\infty)|_\U&\subseteq\tau(\U,\Y),\quad\tau(\U,\Y)|_{L^\infty}\subseteq\tau(L^\infty,L^1).
\end{align*}
\end{lemma}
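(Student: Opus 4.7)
The plan is to first establish the containment chains $L^\infty\subseteq\U\subseteq L^1$ and $L^\infty\subseteq\Y\subseteq L^1$ from decomposability alone, and then read off all four topology inclusions directly from the definitions, using only the elementary fact that compactness is preserved when passing to a weaker topology.

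For $L^\infty\subseteq\U$, I would pick any $u_0\in\U$ and any $\bar u\in L^\infty$ and apply decomposability with $A=\emptyset$ to obtain $\bar u=u_0\cdot 1_\emptyset+\bar u\cdot 1_\Omega\in\U$; the same reasoning gives $L^\infty\subseteq\Y$. For $\U\subseteq L^1$, given $u=(u_1,\ldots,u_n)\in\U$ I would let $\bar y\in L^\infty$ be the componentwise sign $\bar y_i:=\mathrm{sign}(u_i)$; since $\bar y\in L^\infty\subseteq\Y$, the standing well-definedness of the bilinear form forces $E[u\cdot\bar y]=\sum_i E[|u_i|]<\infty$, so $u\in L^1$. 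The symmetric argument yields $\Y\subseteq L^1$.

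The two weak-topology inclusions then fall out of the definitions: the generating seminorms of $\sigma(L^1,L^\infty)|_\U$ are $u\mapsto|E[u\cdot y]|$ with $y\in L^\infty\subseteq\Y$, and so already belong to the family generating $\sigma(\U,\Y)$; conversely the generating seminorms of $\sigma(\U,\Y)|_{L^\infty}$ use $y\in\Y\subseteq L^1$ and so already appear among those generating $\sigma(L^\infty,L^1)$. Since the hypotheses are symmetric in $\U$ and $\Y$, the same reasoning also delivers the swapped versions $\sigma(L^1,L^\infty)|_\Y\subseteq\sigma(\Y,\U)$ and $\sigma(\Y,\U)|_{L^\infty}\subseteq\sigma(L^\infty,L^1)$, which I will reuse below.

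For the Mackey-topology inclusions I would combine these weak inclusions with the compactness transfer. To prove $\tau(L^1,L^\infty)|_\U\subseteq\tau(\U,\Y)$, take any absolutely convex $\sigma(L^\infty,L^1)$-compact $K\subseteq L^\infty$; by the swapped inclusion $\sigma(\Y,\U)|_{L^\infty}\subseteq\sigma(L^\infty,L^1)$, the set $K$ is also $\sigma(\Y,\U)$-compact when viewed inside $\Y$, so the generating seminorm $u\mapsto\sup_{y\in K}|E[u\cdot y]|$ of $\tau(L^1,L^\infty)|_\U$ is automatically $\tau(\U,\Y)$-continuous. The reverse inclusion $\tau(\U,\Y)|_{L^\infty}\subseteq\tau(L^\infty,L^1)$ is obtained symmetrically: an absolutely convex $\sigma(\Y,\U)$-compact $K\subseteq\Y$ is $\sigma(L^1,L^\infty)$-compact inside $L^1$ by the symmetric weak inclusion $\sigma(L^1,L^\infty)|_\Y\subseteq\sigma(\Y,\U)$. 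No real obstacle arises; the lemma is a careful unwinding of definitions, and notably solidity of the spaces is not used anywhere, consistent with Example~\ref{ex:decomp}.
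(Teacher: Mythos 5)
Your proof is correct. The paper itself gives no argument for this lemma --- it simply cites Lemma~6 of \cite{pp12} --- so there is no in-text proof to compare against; your derivation (decomposability with $A=\emptyset$ for $L^\infty\subseteq\U$, the componentwise sign trick plus finiteness of the pairing for $\U\subseteq L^1$, and transferring absolutely convex weakly compact sets across the weak-topology inclusions to get the Mackey inclusions) is exactly the standard route, and your observation that solidity is never used is consistent with the paper's remark that this lemma does not require it.
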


\begin{lemma}\label{lem:solid}
The following are equivalent:
\begin{enumerate}
\item
  $\U$ is solid,
\item
  $y\mapsto u\cdot y$ is continuous from $(\Y,\sigma(\Y,\U))$ to $(L^1,\sigma(L^1,L^\infty))$,
\item
  $\eta\mapsto\eta u$ is continuous from $(L^\infty,\tau(L^\infty,L^1))$ to $(\U,\tau(\U,\Y))$.
\end{enumerate}
\end{lemma}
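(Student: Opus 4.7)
The plan is to introduce the auxiliary condition
\[
(\ast)\quad \eta u\in\U\text{ for every }\eta\in L^\infty\text{ and every }u\in\U,
\]
and prove that each of (1), (2) and (3) is equivalent to $(\ast)$. Both (2) and (3) will be handled by the standard adjoint criterion: a linear map between two spaces in duality is $\sigma$-$\sigma$ (respectively $\tau$-$\tau$) continuous iff its adjoint sends the dual of the codomain into the dual of the domain.

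For $(1)\iff(\ast)$, solidity immediately yields $(\ast)$ since $|\eta u|=|\eta||u|\le\|\eta\|_{L^\infty}|u|$. Conversely, in the scalar case, given $u\in\U$ and $\bar u\in L^1$ with $|\bar u|\le|u|$, define $\eta:=\bar u/u$ where $u\ne 0$ and $\eta:=0$ elsewhere; then $\eta\in L^\infty$ and $\bar u=\eta u\in\U$ by $(\ast)$. The $\reals^n$-valued case reduces to the scalar one coordinate by coordinate.

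For $(2)\iff(\ast)$, fix $u\in\U$ and let $M_u\colon\Y\to L^1$, $y\mapsto u\cdot y$, which is well defined by the duality hypothesis. The adjoint criterion says $M_u$ is $\sigma(\Y,\U)$-$\sigma(L^1,L^\infty)$-continuous iff for every $\eta\in L^\infty$ the functional $y\mapsto E[\eta(u\cdot y)]=E[(\eta u)\cdot y]$ is $\sigma(\Y,\U)$-continuous. Separating duality identifies such continuous functionals with elements of $\U$; testing the candidate representative against $y\in L^\infty\subseteq\Y$ (Lemma~\ref{lem:rel}) forces it to coincide almost surely with $\eta u$, so (2) is exactly $(\ast)$. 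For $(3)\iff(\ast)$, the map $T_u\colon L^\infty\to\U$, $\eta\mapsto\eta u$, requires $(\ast)$ just to be well defined into $\U$; under $(\ast)$ its adjoint $T_u^{\ast}$ sends $y\in\Y$ to $u\cdot y\in L^1$, which is automatic by the duality, and the Mackey version of the adjoint criterion yields the desired $\tau$-$\tau$ continuity.

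The main delicate point is the repeated use of separating duality to identify $\sigma(\Y,\U)$-continuous functionals uniquely with elements of $\U$; this depends crucially on the inclusions $L^\infty\subseteq\Y$ and $\U\subseteq L^1$ from Lemma~\ref{lem:rel}, which guarantee that any two candidates for the adjoint image that agree as functionals on $L^\infty$ must coincide almost surely as elements of $L^1$.
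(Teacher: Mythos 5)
Your proposal is correct and follows essentially the same route as the paper: the heart of both arguments is the identification of conditions (1)--(3) with the statement that $\eta u\in\U$ for all $\eta\in L^\infty$, via the adjoint criterion and the fact that $L^\infty\subseteq\Y$ separates points of $L^1$. The only cosmetic difference is in (3), where you invoke the general principle that weak continuity implies Mackey continuity, whereas the paper verifies the Mackey estimate directly by noting that the image $\{u\cdot y \mid y\in K\}$ of a $\sigma(\Y,\U)$-compact set $K$ is $\sigma(L^1,L^\infty)$-compact; these are the same argument in different clothing, and your version has the minor advantage of explicitly closing the cycle through the auxiliary condition $(\ast)$.
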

\begin{proof}
For any $u\in\U$, $y\in\Y$ and $\eta\in L^\infty$,
\[
E[(u\cdot y)\eta] = E[(\eta u)\cdot y].
\]
This is $\sigma(\Y,\U)$-continuous in $y$ if and only if there is a $u'\in\U$ such that $E[(\eta u)\cdot y]=E[u'\cdot y]$ for all $y\in\Y$. Since $L^\infty\subset\Y$ separates the elements of $L^1$, we get that $y\mapsto E[(u\cdot y)\eta]$ is continuous if and only if $\eta u\in\U$. This proves the equivalence of 1 and 2.

Assume 2 and let $K\subset\Y$ be $\sigma(\Y,\U)$-compact. We have
\[
\sup_{y\in K}\langle y, \eta u\rangle = \sup_{y\in K}\langle u\cdot y,\eta\rangle_{L^\infty} =  \sup_{\xi\in D}\langle \xi,\eta\rangle_{L^\infty},
\]
where $D=\{u\cdot y\,|\, y\in K\}$ is $\sigma(L^1,L^\infty)$-compact since $y\mapsto u\cdot y$ is continuous.
\end{proof}

\begin{corollary}
In the setting of Corollary~\ref{cor:f2}, \ref{ax4} holds if and only if $\tilde\U^*=\Y$.
\end{corollary}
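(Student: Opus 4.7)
The direction \ref{ax4}$\Rightarrow\tilde\U^*=\Y$ is already asserted in Corollary~\ref{cor:f2}, so all content lies in the converse. The unconditional equality $\tilde\U^*=\M$ from Corollary~\ref{cor:f2}, combined with the direct-sum decomposition $\M=\Y\oplus\M^s$ from Theorem~\ref{thm:f2}, shows that $\tilde\U^*=\Y$ is equivalent to $\M^s=\{0\}$. I therefore plan to establish \ref{ax4} by contraposition: assuming \ref{ax4} fails, I will exhibit a nonzero element of $\M^s$.

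Start from a witness of the failure: $p\in\P$, $u\in L^\infty$ and a decreasing $(A^\nu)_{\nu=1}^\infty\subset\F$ with $P(A^\nu)\downto 0$ and $p(u1_{A^\nu})\ge\epsilon>0$ for every $\nu$. The candidate for producing a singular functional is the map on $L^\infty$ defined by
\[
\phi(v):=\inf_\nu p(v1_{A^\nu}).
\]
By \ref{ax3} the sequence $p(v1_{A^\nu})$ is decreasing in $\nu$, so the infimum is a limit, and $\phi$ is a seminorm dominated by $c\|\cdot\|_{L^\infty}$ via \ref{ax2}. Two further properties are decisive. First, $\phi(u)\ge\epsilon$ by construction. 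Second, since $1_{\Omega\setminus A^\nu}1_{A^\mu}=0$ a.s.\ for $\mu\ge\nu$, one has the vanishing identity
\[
\phi(v1_{\Omega\setminus A^\nu})=0\qquad\forall v\in L^\infty,\ \forall\nu.
\]

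Hahn--Banach extends the linear form $t\mapsto t\phi(u)$ on $\reals\cdot u$ (dominated there by $\phi$) to a linear $\psi:L^\infty\to\reals$ with $|\psi|\le\phi$ and $\psi(u)=\phi(u)$. Since $\psi$ is $L^\infty$-continuous, Theorem~\ref{thm:YH} supplies a unique $m\in\M^1$ representing $\psi$. Plugging $v=1_B$ for measurable $B\subseteq\Omega\setminus A^\nu$ into the vanishing identity gives $m(B)=0$, so $|m|^*(\Omega\setminus A^\nu)=0$; since $P(A^\nu)\downto 0$, this is precisely $m\in\M^{1s}$. The estimate $|\int v\,dm|=|\psi(v)|\le\phi(v)\le p(v)$ yields $p^\circ(m)\le 1$, so $m\in\dom p^\circ$. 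Finally $\int u\,dm=\psi(u)=\phi(u)\ge\epsilon>0$ rules out $m=0$. Hence $m\in\M^s\setminus\{0\}$, contradicting the hypothesis.

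The main delicate point is designing $\phi$ so that the three requirements $\phi\le p$ (needed for $m\in\dom p^\circ$), $\phi(\cdot\,1_{\Omega\setminus A^\nu})=0$ (needed for pure finite additivity of $m$), and $\phi(u)>0$ (needed for $m\ne 0$) hold simultaneously; the infimum over $\nu$ of the truncated seminorms $p(\cdot\,1_{A^\nu})$ achieves all three. The passage to $\reals^n$-valued random variables poses no extra difficulty: $\phi$ and $\psi$ live on $L^\infty(\reals^n)$, and the vector-valued form of Theorem~\ref{thm:YH} alluded to in the text delivers an $\reals^n$-valued measure $m$ whose singularity is inherited component-by-component from the identity $\int 1_B\,dm=0$ for $B\subseteq\Omega\setminus A^\nu$.
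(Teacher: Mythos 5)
Your proof is correct, but it takes a genuinely different route from the paper's. The paper proves the converse direction by a topological argument: if $\tilde\U^*=\Y$, then the $\P$-topology on $\tilde\U$ is no stronger than the Mackey topology $\tau(\tilde\U,\Y)$, and Lemma~\ref{lem:solid} (continuity of $\eta\mapsto\eta u$ from $\tau(L^\infty,L^1)$ into $\tau(\U,\Y)$) together with the fact that $1_{A^\nu}\to 0$ in $\tau(L^\infty,L^1)$ then yields \ref{ax4}; this is short but leans on the separating-duality and solidity framework of Section~\ref{sec:nec}. You instead argue by contraposition and \emph{construct} a nonzero element of $\M^{1s}\cap\dom p^\circ$ from a witness of the failure of \ref{ax4}: the auxiliary seminorm $\phi(v)=\inf_\nu p(v1_{A^\nu})$ is well chosen, since $\phi\le p$ gives $p^\circ(m)\le 1$, the vanishing of $\phi$ on $v1_{\Omega\setminus A^\nu}$ gives $|m|^*(\Omega\setminus A^\nu)=0$ after representing the Hahn--Banach extension $\psi$ via Theorem~\ref{thm:YH}, and $\psi(u)=\phi(u)\ge\epsilon$ rules out $m=0$. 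The one step you state tersely but which is fine is the equivalence of $\tilde\U^*=\Y$ with $\M^s=\{0\}$: a nonzero $m^s\in\M^s$ is $\P$-continuous on $\tilde\U$ by Lemma~\ref{lem:M} and cannot agree on the dense subspace $L^\infty$ with any $E[\,\cdot\,y]$, by the uniqueness in Theorem~\ref{thm:YH}. Your approach is more self-contained (it needs only Hahn--Banach, Theorem~\ref{thm:YH} and Lemma~\ref{lem:M}, not Lemma~\ref{lem:solid} or the Mackey-topology comparison) and has the added value of exhibiting an explicit singular functional witnessing $\tilde\U^*\ne\Y$; the paper's argument is shorter once the Section~\ref{sec:nec} machinery is in place.
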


\begin{proof}
By Lemma~\ref{lem:M}, \ref{ax4} implies $\M^s=0$, so $\tilde\U^*=\Y$ by Corollary~\ref{cor:f2}. On the other hand, if $\tilde\U^*=\Y$,  the topology of $\tilde\U$ cannot be stronger than $\tau(\tilde\U,\Y)$. In that case, Lemma~\ref{lem:solid} implies that $p(u\eta^\nu)\to 0$ if $\eta^\nu\to 0$ in $\tau(L^\infty,L^1)$. Since $1_{A^\nu}\to 0$ in $\tau(L^\infty,L^1)$ if $P(A^\nu)\to 0$, assumption~\ref{ax4} holds.
\end{proof}

\begin{lemma}\label{lem:solidcomp}
A convex set $C\subset\U$ is $\sigma(\U,\Y)$-compact if and only if, for every $y\in\Y$, the set $\{u\cdot y\mid u\in C\}$ is  weakly compact in $L^1$.
\end{lemma}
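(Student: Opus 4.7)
For the forward implication, apply Lemma~\ref{lem:solid} with the roles of $\U$ and $\Y$ swapped: solidity of $\Y$ gives continuity of $T_y\colon u\mapsto u\cdot y$ from $(\U,\sigma(\U,\Y))$ to $(L^1,\sigma(L^1,L^\infty))$ for every $y\in\Y$, so the image $T_y(C)$ is weakly compact in $L^1$ whenever $C$ is $\sigma(\U,\Y)$-compact. Convexity is not needed here.

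For the converse, the plan is a Tychonoff-plus-truncation argument. First observe that the initial topology on $\U$ generated by the maps $T_y\colon \U\to (L^1,\sigma(L^1,L^\infty))$, $y\in\Y$, coincides with $\sigma(\U,\Y)$: the inclusion $\supseteq$ holds because taking expectations is weak $L^1$-continuous, and the reverse inclusion holds because solidity of $\Y$ gives $y\eta\in\Y$ for every $\eta\in L^\infty$, so each $u\mapsto E[(u\cdot y)\eta]=\langle u,y\eta\rangle$ is $\sigma(\U,\Y)$-continuous. Assuming the hypothesis, set $K_y:=T_y(C)$ and form the compact Tychonoff product $P:=\prod_{y\in\Y}K_y$, each factor carrying the weak $L^1$ topology. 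Given a net $(u_\alpha)\subset C$, extract a subnet along which $u_\alpha\cdot y\to\xi_y$ weakly in $L^1$ for every $y\in\Y$. Letting $y$ range over the standard basis vectors $e_i\in L^\infty\subseteq\Y$ (Lemma~\ref{lem:rel}) assembles a candidate $u:=(\xi_{e_1},\dots,\xi_{e_n})$ with $u_\alpha\to u$ componentwise weakly in $L^1$; moreover $u\in C$, since each $K_{e_i}$ is weakly closed in $L^1$.

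It remains to promote this componentwise weak $L^1$ convergence to $\sigma(\U,\Y)$ convergence. Fix $y\in\Y$ and, for $k\in\naturals$, let $y^k:=y\,1_{\{|y|\le k\}}\in L^\infty$. Decompose
\[
\langle u_\alpha,y\rangle-\langle u,y\rangle = E[(u_\alpha-u)\cdot y^k] + E[u_\alpha\cdot(y-y^k)] - E[u\cdot(y-y^k)].
\]
For each fixed $k$ the first summand vanishes along the subnet because $y^k\in L^\infty$. The second is bounded by $E[|u_\alpha\cdot y|\,1_{\{|y|>k\}}]$, which is small uniformly in $\alpha$: weak $L^1$-compactness of $K_y$ yields uniform integrability of $\{u_\alpha\cdot y\}_\alpha$ by Dunford--Pettis, while $P(|y|>k)\to 0$ because $y\in\Y\subseteq L^1$. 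The third summand vanishes by dominated convergence, using $u\cdot y\in L^1$, which follows from solidity of $\U,\Y$ and the finiteness of $\langle|u|,|y|\rangle=E[|u|\cdot|y|]$.

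The main obstacle is this last step: $T_y$ is not weak $L^1$-continuous for unbounded $y$, so a weak $L^1$ limit of $u_\alpha$ is not automatically a $\sigma(\U,\Y)$ limit against arbitrary $y\in\Y$. The Dunford--Pettis uniform integrability supplied by the hypothesis on each $K_y$ is precisely what is needed to control the tails uniformly in $\alpha$ and close the truncation argument.
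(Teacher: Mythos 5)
Your proof is correct and follows essentially the same route as the paper's: necessity from the continuity statement of Lemma~\ref{lem:solid} (with the roles of $\U$ and $\Y$ interchanged, as you rightly note), and sufficiency by extracting a $\sigma(L^1,L^\infty)$-convergent subnet via the constant directions $e_i$ and then upgrading to $\sigma(\U,\Y)$-convergence through the truncation $y1_{\{|y|\le k\}}$ together with the uniform integrability of $\{u\cdot y\mid u\in C\}$ supplied by Dunford--Pettis. The only step where your justification is shakier than it looks is the claim that the limit $u$ lies in $C$: weak closedness of each coordinate image $K_{e_i}$ does not by itself force $u\in C$ (for $n=1$ one can instead take $y=1$ so that the hypothesis directly makes $C$ itself weakly compact, hence weakly closed, in $L^1$); the paper's own proof asserts $u\in C$ without argument at the same point, so this is a shared, not a new, gap.
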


\begin{proof}
Since continuous images of compact sets are compact, Lemma~\ref{lem:solid} gives the necessity. Let $(u^\nu)$ be a net in $C$. Letting $y$ range over unit constant vectors, we see that $C$ is  $\sigma(L^1,L^\infty)$-compact. Thus there is a subnet and $u\in C$ such that $u^\nu\to u$ in $\sigma (L^1,L^\infty)$. Let $y\in \Y$ and $\epsilon>0$. Since $\{u\cdot y\mid u\in C\}$ is  weakly compact in $L^1$, it is uniformly integrable, so there exists $n$ such that $|E[(u^\nu-u)\cdot y 1_{|y|> n}]|<\epsilon$ for every $\nu$. Since $u^\nu \to u$ in $\sigma(L^1,L^\infty)$, there exists $\nu'$ such that $|E[(u^\nu-u)\cdot y 1_{|y|\le n}]|<\epsilon$ for all $\nu\ge\nu'$. Thus, for all $\nu\ge\nu'$,
\[
|E[(u^\nu-u)\cdot y]| \le 2\epsilon,
\]
which proves that $u^\nu\to u$ in $\sigma(\U,\Y)$ 
\end{proof}

\begin{corollary}\label{cor:solid}
Given $\bar u\in \U$, the set 
\[
C:=\{u \in \U \mid |u| \le |\bar u|\}
\]
is $\sigma(\U,\Y)$-compact.
\end{corollary}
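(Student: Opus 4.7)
The plan is to invoke Lemma~\ref{lem:solidcomp}. First, $C$ is convex, since $|\lambda u+(1-\lambda)u'|\le\lambda|u|+(1-\lambda)|u'|\le|\bar u|$ componentwise for $u,u'\in C$ and $\lambda\in[0,1]$. So it suffices to fix $y\in\Y$ and prove that $D:=\{u\cdot y\mid u\in C\}\subset L^1$ is weakly compact in $L^1$.

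The key structural observation is that $D$ decomposes as $D=D_1+\cdots+D_n$, where $D_i:=\{g\in L^1\mid |g|\le|\bar u_i y_i|\}$ is an $L^1$-order interval. The inclusion $D\subseteq\sum_i D_i$ is immediate from $|u_iy_i|\le|\bar u_iy_i|$. For the converse, given $g_i\in D_i$, define $u_i:=g_i/y_i$ on $\{y_i\neq 0\}$ and $u_i:=0$ elsewhere; since the bound $|g_i|\le|\bar u_i y_i|$ forces $g_i$ to vanish on $\{y_i=0\}$, one gets $u_iy_i=g_i$ and $|u_i|\le|\bar u_i|$. Solidity of $\U$ then gives $u:=(u_1,\dots,u_n)\in C$ with $u\cdot y=\sum_i g_i$.

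The second ingredient is that each $D_i$ is weakly compact in $L^1$. By solidity of $\U$ and $\Y$, the vectors $\bar v:=(|\bar u_1|,\dots,|\bar u_n|)$ and $\bar w:=(|y_1|,\dots,|y_n|)$ lie in $\U$ and $\Y$ respectively, so $\sum_i E[|\bar u_i||y_i|]=\langle\bar v,\bar w\rangle<\infty$; in particular each $|\bar u_i y_i|\in L^1$. Thus $D_i$ is dominated by an integrable function and hence uniformly integrable; it is also weakly closed, since the condition ``$|g|\le|\bar u_i y_i|$ a.s.'' is equivalent to the family of weakly continuous inequalities $\pm E[g1_A]\le E[|\bar u_i y_i|1_A]$ indexed by $A\in\F$. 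By Dunford--Pettis, each $D_i$ is weakly compact. Since the sum of finitely many weakly compact convex sets is weakly compact---being the image of their compact product under the continuous addition map---$D$ itself is weakly compact, and Lemma~\ref{lem:solidcomp} completes the proof.

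The only delicate point is the reverse inclusion in $D=\sum_i D_i$, where recovering $u$ from the $g_i$'s requires the pointwise-division trick on $\{y_i\neq 0\}$ and relies on solidity of $\U$; the Dunford--Pettis step and the weak closedness of the order intervals $D_i$ are routine.
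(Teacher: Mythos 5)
Your proof is correct, and while it starts from the same reduction as the paper (convexity of $C$ plus Lemma~\ref{lem:solidcomp}, then Dunford--Pettis), it establishes weak compactness of $D=\{u\cdot y\mid u\in C\}$ by a genuinely different mechanism. The paper uses solidity of $\U$ to rewrite $C_y=\{u\cdot y\mid u\in L^1,\ |u|\le|\bar u|\}$ and proves this set is $\sigma(L^1,L^\infty)$-closed directly: given $u^\nu\cdot y\to\xi$ weakly with $|u^\nu|\le|\bar u|$, it passes to convex combinations, applies the Komlos lemma to get a.s.\ convergence $u^\nu\to u$ with $|u|\le|\bar u|$, and identifies $\xi=u\cdot y$ by dominated convergence. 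You instead prove the exact equality $D=D_1+\dots+D_n$ with $D_i=\{g\in L^1\mid |g|\le|\bar u_iy_i|\}$ (the reverse inclusion, via pointwise division on $\{y_i\neq0\}$ together with solidity of $\U$, is the one nontrivial step, and you handle it correctly, including the observation that $g_i$ vanishes where $y_i$ does), show each order interval $D_i$ is weakly closed as an intersection of the half-spaces $\pm E[g1_A]\le E[|\bar u_iy_i|1_A]$, $A\in\F$, and use that a finite sum of weakly compact sets is weakly compact. This trades the Komlos/a.s.-convergence machinery for an elementary order-interval argument; the exact equality $D=\sum_iD_i$ is precisely what lets you avoid proving closedness of $D$ itself. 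Your appeal to solidity of $\Y$ to get $(|y_1|,\dots,|y_n|)\in\Y$ and hence $\sum_iE[|\bar u_i||y_i|]<\infty$ is legitimate here, since both spaces are assumed solid in this section, and the paper's own uniform-integrability claim rests on the same domination.
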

\begin{proof}
By Lemma~\ref{lem:solidcomp}, it suffices to show that
\[
C_y:=\{u\cdot y \mid  u\in\U, |u| \le |\bar u|\}
\]
 is $\sigma(L^1,L^\infty)$-compact for every $y\in\Y$. The set $C_y$ is uniformly integrable, so, by Dunford-Pettis, it suffices to show that $C_y$ is $\sigma(L^1,L^\infty)$-closed. Since $\U$ is solid, 
 \[
C_y=\{u\cdot y \mid  u\in L^1, |u| \le |\bar u|\}.
\]
Let $u^\nu\cdot y\to\xi$ in $L^1$, where $|u^\nu|\le |\bar u|$. Passing to convex combinations, we may assume, by Komlos lemma, that $u^\nu\to u$ almost surely for some $u$ with $|u|\le|\bar u|$. By dominated convergence, $u^\nu\cdot y\to u\cdot y$ in $L^1$, so $C_y$ is closed.
\end{proof}

\begin{theorem}\label{thm:converse}
If $\U$ is $\tau(\U,\Y)$-complete, then there exists a collection $\P$ of lsc sublinear symmetric functions $p:L^1\to\ereals$ such that the topology generated by $\P$ on $\U$ is compatible with the duality,
\[
\U=\bigcap_{p\in\P} \dom p,\quad \Y=\bigcup_{p\in\P} \dom p^\circ
\]
and each $p\in\P$ satisfies \ref{ax1}--\ref{ax5}.
\end{theorem}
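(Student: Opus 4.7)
The proposal is to take
\[
\P := \{p_{\bar y} : \bar y \in \Y\},\qquad p_{\bar y}(u):=E\bigl[|u|\cdot|\bar y|\bigr],
\]
each $p_{\bar y}$ viewed as a $[0,+\infty]$-valued function on $L^1$. The argument then proceeds in four steps, with one nontrivial technical point.

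\textbf{Axioms and topology.} Sublinearity, symmetry and monotonicity in $|u|$ of $p_{\bar y}$ are immediate from the monotonicity of the integral, and lower semicontinuity on $L^1$ follows from Fatou's lemma along an almost-sure subsequence. For \ref{ax1} include $\bar y$ equal to a constant vector, which lies in $\Y$ because $L^\infty\subseteq\Y$ by Lemma~\ref{lem:rel}. For \ref{ax2} use $p_{\bar y}(u)\le\|u\|_{L^\infty}E[|\bar y|]$, with $E[|\bar y|]<\infty$ since $\Y\subseteq L^1$. Property \ref{ax3} is the monotonicity. Properties \ref{ax4} and \ref{ax5} are dominated convergence statements; the key point for \ref{ax5} is that for $u\in\U$ and $\bar y\in\Y$ the product $|u|\cdot|\bar y|$ lies in $L^1$, which follows from the solidity of $\U$ and $\Y$ and the assumption of a separating duality ($|u|\in\U$ and $|\bar y|\in\Y$ pair to a finite number). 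Compatibility of the $\P$-topology is then immediate from $|E[u\cdot y]|\le p_y(u)$ (so $\sigma(\U,\Y)\subseteq$ $\P$-topology) together with the identification of $p_{\bar y}$ as the support function of $K_{\bar y}:=\{y'\in L^1:|y'|\le|\bar y|\}$, which is $\sigma(\Y,\U)$-compact by Corollary~\ref{cor:solid} applied with $\Y$ in place of $\U$; by Mackey--Arens each $p_{\bar y}$ is thus $\tau(\U,\Y)$-continuous, so the $\P$-topology lies between the weak and Mackey topologies.

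\textbf{Identification of $\Y$.} A direct computation shows $p_{\bar y}^\circ(\bar y)\le 1$, so $\bar y\in\dom p_{\bar y}^\circ$ and $\Y\subseteq\bigcup_{\bar y}\dom p_{\bar y}^\circ$. Conversely, if $y\in\dom p_{\bar y}^\circ$, then integration against $y$ is continuous in the $\P$-topology and therefore, by compatibility, $y\in\Y$.

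\textbf{Identification of $\U$.} The inclusion $\U\subseteq\bigcap_{\bar y}\dom p_{\bar y}$ was checked above. For the reverse, let $u\in L^1$ satisfy $E[|u|\cdot|\bar y|]<\infty$ for every $\bar y\in\Y$, and set $u^\nu:=u\mathbf{1}_{|u|\le\nu}\in L^\infty\subseteq\U$. The plan is to show that $(u^\nu)$ is $\tau(\U,\Y)$-Cauchy: for every $\sigma(\Y,\U)$-compact absolutely convex $K\subseteq\Y$ and $\mu\ge\nu$,
\[
\sup_{y\in K}E\bigl[|u^\mu-u^\nu|\cdot|y|\bigr]\le\sup_{y\in K}E\bigl[|u|\mathbf{1}_{|u|>\nu}|y|\bigr],
\]
which tends to zero as $\nu\to\infty$ provided $\{|u|\cdot|y|:y\in K\}$ is uniformly integrable in $L^1$. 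Once the Cauchy property is established, $\tau$-completeness of $\U$ furnishes a $\tau$-limit $u^*\in\U$; since $u^\nu\to u$ in $L^1$ (dominated by $|u|$) and $u^\nu\to u^*$ in $\sigma(L^1,L^\infty)$ by Lemma~\ref{lem:rel}, the uniqueness of weak limits in $L^1$ forces $u=u^*\in\U$.

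\textbf{Main obstacle.} The hard step is the uniform integrability of $\{|u|\cdot|y|:y\in K\}$, because $u$ is not a priori in $\U$ and Lemma~\ref{lem:solidcomp} is not directly applicable. The plan is to handle this by truncation plus a Banach--Steinhaus argument: for each $N$ the set $\{(|u|\wedge N)|y|:y\in K\}$ is $\sigma(L^1,L^\infty)$-compact by Lemma~\ref{lem:solidcomp} applied to $|u|\wedge N\in L^\infty\subseteq\U$, hence uniformly integrable; Banach--Steinhaus applied to the pointwise-bounded family of continuous linear functionals $y\mapsto E[(|u|\wedge N)\cdot y]$ on the Banach subspace of $\Y$ generated by $K$ gives the uniform bound $\sup_{y\in K}E[|u|\,|y|]<\infty$; combining this with monotone convergence as $N\to\infty$ transfers the uniform integrability from the truncations to $\{|u|\cdot|y|:y\in K\}$ itself. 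This is the delicate place where the solidity of $\U$ and $\Y$, Corollary~\ref{cor:solid}--Lemma~\ref{lem:solidcomp}, and the $\tau$-completeness all interact.
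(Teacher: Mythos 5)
Your construction is structurally the paper's: the paper also generates the topology by support functions of $\sigma(\Y,\U)$-compact solid convex subsets of $\Y$, verifies \ref{ax1}--\ref{ax3} exactly as you do, gets \ref{ax4}--\ref{ax5} from the uniform integrability supplied by Lemma~\ref{lem:solidcomp}, and obtains compatibility by squeezing the $\P$-topology between $\sigma(\U,\Y)$ and $\tau(\U,\Y)$ via Corollary~\ref{cor:solid}. Your verification of the axioms, of compatibility, and of $\Y=\bigcup_{p}\dom p^\circ$ all go through. The substantive difference is the size of the family: you take only the order-interval seminorms $p_{\bar y}(u)=E[|u|\cdot|\bar y|]$, i.e.\ the support functions of the sets $K_{\bar y}$, whereas the paper takes the support functions of \emph{all} solid convex $\sigma(\Y,\U)$-compact $C\subseteq\Y$. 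With your family, $\bigcap_{\bar y}\dom p_{\bar y}$ is the full K\"othe dual $\{u\in L^1\mid E[|u|\cdot|y|]<\infty\ \forall y\in\Y\}$, which contains the paper's intersection $\bigcap_C\dom p_C$; so you have committed yourself to the strictly stronger identity ``$\U$ equals the K\"othe dual of $\Y$''.

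That identity is where the proposal fails, and the failure sits exactly in the step you flag as the main obstacle. First, the ``transfer'' of uniform integrability is circular: from uniform integrability of each truncated family $\{(|u|\wedge N)\,|y|\mid y\in K\}$ together with the Banach--Steinhaus bound $\sup_{y\in K}E[|u|\,|y|]<\infty$ you cannot conclude uniform integrability of $\{|u|\,|y|\mid y\in K\}$, because the decomposition $E[|u||y|1_A]\le E[(|u|\wedge N)|y|1_A]+\sup_{y\in K}E\bigl[|u||y|1_{\{|u|>N\}}\bigr]$ leaves you needing the tail term to vanish uniformly in $y$ as $N\to\infty$ --- which is precisely the statement being proved; Banach--Steinhaus bounds it but does not make it small. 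Second, no repair is possible, because the claim $\bigcap_{\bar y}\dom p_{\bar y}=\U$ is actually false under the hypotheses: take $n=1$, $P$ atomless, a finite nonrandom Young function $\Phi$ violating the $\Delta_2$-condition, $\U=M^\Phi$ and $\Y=L^{\Phi^*}$. These are solid, decomposable and in separating duality; by Example~\ref{ex:mo}, $(M^\Phi)^*=L^{\Phi^*}$, so $\tau(M^\Phi,L^{\Phi^*})$ is the norm topology and $\U$ is Mackey-complete. Yet H\"older's inequality gives $E[|u||y|]<\infty$ for every $u\in L^\Phi$ and $y\in L^{\Phi^*}$, while $M^\Phi\subsetneq L^\Phi$; for $u\in L^\Phi\setminus M^\Phi$ your truncations $u1_{\{|u|\le\nu\}}$ are \emph{not} $\tau(\U,\Y)$-Cauchy (norm-Cauchyness of the truncations characterizes membership in $M^\Phi$), so the final completeness argument cannot be run. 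I should add that this example also strains the paper's own concluding step (every $\sigma(\Y,\U)$-compact $C$ is norm-bounded in $L^{\Phi^*}$, so $L^\Phi\subseteq\dom p_C$ there as well), but with the richer family $\C$ the paper at least argues via $\P$-density of $L^\infty$ and completeness of $\bigcap_C\dom p_C$ rather than via the K\"othe-bidual identity; if you want a proof in the paper's spirit you should adopt that larger family and that density-plus-completeness route rather than trying to prove $\U=\{u\mid E[|u||y|]<\infty\ \forall y\in\Y\}$ directly.
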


\begin{proof}
Let $\C$ be the collection of $\sigma(\Y,\U)$-compact solid convex subsets of $\Y$ and let $\P$ the collection of the functions $p:L^1\to\ereals$ of the form
\[
p(u) = \sup_{y\in C}E[u\cdot y],
\]
where $C\in\C$. Each $p\in\P$ is convex and positively homogeneous. Since the unit ball of $L^\infty$ is in $\C$, \ref{ax1} holds. The topology generated by $\P$ is weaker than the Mackey-topology which is generated by all $\sigma(\Y,\U)$-compact sets. By Lemma~\ref{lem:rel}, \ref{ax2} holds. Given $\bar y\in\Y$,  $\{y\in\Y\mid |y|\le|\bar y|\}$ is compact by Corollary~\ref{cor:solid}. It is also solid and convex, so the topology generated by $\P$ is no weaker than $\sigma(\U,\Y)$. The topology generated by $\P$ is thus compatible with the duality.

Solidity of $C$ and the interchange rule~\cite[Theorem~14.60]{rw98} give
\begin{align*}
p(u) &= \sup_{y\in C,y'\in L^1}\{E[u\cdot y']\mid |y'|^*\le|y|^*\}\\
  &= \sup_{y\in C}E[|u||y|^*],
\end{align*}
so $p$ is lower semicontinuous in $L^1$ and satisfies \ref{ax3}.

By Lemma~\ref{lem:solidcomp}, the set $\{u\cdot y\mid y\in C\}$ is uniformly integrable so $p(u1_{A^\nu})\downto 0$ whenever $(A^\nu)_{\nu=1}^\infty$ is a decreasing sequence with $P(A^\nu)\downto 0$. Thus, \ref{ax5} holds. This also implies that $L^\infty$ is $\P$-dense in $\dom p$.

Any $C\in\C$ is $\sigma(L^1,L^\infty)$-compact so an application of bipolar theorem in the duality pairing $(L^1,L^\infty)$ gives
\[
p^\circ(y) = \inf\{\gamma>0\mid y/\gamma\in C\}.
\]
Thus $\dom p^\circ\subset\Y$. As noted earlier, any $y\in\Y$ belongs to some $C\in\C$ so $\Y=\cup_{p\in\P}\dom p^\circ$.

The $\sigma(\Y,\U)$-compactness of $C\in\C$ implies $\sigma_C(u)<\infty$ for any $u\in\U$. Thus, $\U\subset\cap_{p\in\P}\dom p$. On the other hand, $\cap_{p\in\P}\dom p$ is complete in the $\P$-topology (see Remark~\ref{rem:complete}) so it is complete also in the topology generated by $\sigma(\Y,U)$-compact convex sets. Since $L^\infty$ is dense in $\dom p$, we have that $\U$ is dense in $\cap_{p\in\P}\dom p$ and thus, $\U=\cap_{p\in\P}\dom p$.
\end{proof}

Theorem~\ref{thm:converse} puts us in the setting of Remark~\ref{rem:complete}. Combined with Lemma~\ref{lem:solid}, we thus get the following two results.

\begin{corollary}\label{cor:solidseq}
If $\U$ is $\tau(\U,\Y)$-complete, then it is sequentially $\sigma(\U,\Y)$-complete.
\end{corollary}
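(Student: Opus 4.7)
The plan is to reduce to the weak sequential completeness of $L^1$. First, I would invoke Theorem~\ref{thm:converse} to obtain a family $\P$ of $L^1$-lower semicontinuous sublinear symmetric functions satisfying \ref{ax1}--\ref{ax5} such that $\U=\bigcap_{p\in\P}\dom p$ and the $\P$-topology is compatible with the duality $(\U,\Y)$. By Lemma~\ref{lem:rel} we also have $L^\infty\subseteq\U,\Y\subseteq L^1$.

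Given a $\sigma(\U,\Y)$-Cauchy sequence $(u^\nu)$, the inclusion $L^\infty\subseteq\Y$ ensures that $(u^\nu)$ is $\sigma(L^1,L^\infty)$-Cauchy in $L^1$, so the weak sequential completeness of $L^1$ (applied componentwise) yields $u\in L^1$ with $u^\nu\to u$ in $\sigma(L^1,L^\infty)$. To see $u\in\U$, I would argue that $(u^\nu)$ is $\sigma(\U,\Y)$-bounded and hence $\tau(\U,\Y)$-bounded by Mackey's theorem, so by compatibility of the $\P$-topology we get $\sup_\nu p(u^\nu)<\infty$ for each $p\in\P$. Convexity and $L^1$-lower semicontinuity of $p$ imply $\sigma(L^1,L^\infty)$-lower semicontinuity, so $p(u)\le\liminf_\nu p(u^\nu)<\infty$ for every $p\in\P$, giving $u\in\bigcap_{p\in\P}\dom p=\U$.

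The remaining task is to show $\langle u^\nu,y\rangle\to\langle u,y\rangle$ for every $y\in\Y$. Solidity of $\Y$ yields $\eta y\in\Y$ whenever $\eta\in L^\infty$, so $\langle u^\nu\cdot y,\eta\rangle=\langle u^\nu,\eta y\rangle$ is Cauchy for each $\eta\in L^\infty$; that is, $(u^\nu\cdot y)$ is $\sigma(L^1,L^\infty)$-Cauchy in $L^1$, and hence converges weakly to some $\xi_y\in L^1$. The truncations $y^n:=y\,1_{\{|y|\le n\}}$ lie in $L^\infty$, and from $u^\nu\to u$ in $\sigma(L^1,L^\infty)$ one gets $u^\nu\cdot y^n\to u\cdot y^n$ weakly in $L^1$. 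Multiplying the weak limit $\xi_y$ by $1_{\{|y|\le n\}}\in L^\infty$ (a $\sigma(L^1,L^\infty)$-continuous operation) then identifies $\xi_y\,1_{\{|y|\le n\}}=u\cdot y\,1_{\{|y|\le n\}}$ almost surely, and letting $n\to\infty$ gives $\xi_y=u\cdot y$ almost surely. Pairing $u^\nu\cdot y\to u\cdot y$ against the constant $1\in L^\infty$ finally yields $\langle u^\nu,y\rangle=E[u^\nu\cdot y]\to E[u\cdot y]=\langle u,y\rangle$.

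The main obstacle is this last step: extracting convergence of the $\Y$-pairings from weak $L^1$-convergence when $y\in\Y$ need not be bounded. Solidity of $\Y$ together with weak sequential completeness of $L^1$ produces the auxiliary weak limit $\xi_y$, and the truncation identifies it with $u\cdot y$; the other ingredients (Mackey boundedness, lower semicontinuity of each $p$, and compatibility of the $\P$-topology) are supplied by Theorem~\ref{thm:converse}.
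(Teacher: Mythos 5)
Your proof is correct and follows essentially the same route as the paper's: reduce to the weak sequential completeness of $L^1$ via Theorem~\ref{thm:converse}, obtain $u\in\U$ from Mackey boundedness of the Cauchy sequence together with the $L^1$-lower semicontinuity of each $p\in\P$, and then identify the weak $L^1$-limit of $(u^\nu\cdot y)$ with $u\cdot y$ using solidity of $\Y$. The only difference is in that last identification step, where you use the truncations $y\,1_{\{|y|\le n\}}$ in place of the paper's appeal to Mazur's theorem; both arguments are valid.
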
 

\begin{proof}
Let $(u^\nu)_{\nu=1}^\infty$ be a $\sigma(\U,\Y)$-Cauchy sequence. Since $\sigma(\U,\Y)$ is stronger than $\sigma(L^1,L^\infty)$ which, by \cite[Theorem IV.8.6]{ds88}, is  sequentially complete, there exists $u\in L^1$ such that $u^\nu \to u$ in $\sigma(L^1,L^\infty)$.  Since $\sigma(\U,\Y)$-Cauchy sequences are bounded in any topology compatible with the pairing, the sequence is also bounded in the $\P$-topology of Theorem~\ref{thm:converse}. Thus, for any $p\in\P$, there exist $\gamma$ such that $p(u^\nu)\le\gamma$. Since level-sets of $p$ are closed in $L^1$ and $\U=\bigcap p$,  we get $u\in\U$. It suffices to show that $u^\nu \to u$ in $\sigma(\U,\Y)$. 

 By Lemma~\ref{lem:solid}, for any $y\in\Y$, $(u^\nu\cdot y)_{\nu=1}^\infty$ is Cauchy in $\sigma(L^1,L^\infty)$, so by sequential closedness of $L^1$ again, it converges in $\sigma(L^1,L^\infty)$ to some  $\xi\in L^1$. By Mazur's theorem, there is a subsequence of convex combinations $\bar u^\nu$ such that $\bar u^\nu\to u$ in $L^1$-norm, and thus  $\bar u^\nu\cdot y\to u\cdot y$ in probability. Clearly, $\bar u^\nu\cdot y \to\xi$ in $\sigma(L^1,L^\infty)$, so we must have $\xi=u\cdot y$. 
\end{proof}

When $\U$ is $\tau(\U,\Y)$-complete, we get the following version of Lemma~\ref{lem:solidcomp}.

\begin{corollary}\label{cor:solidrelcomp}
Assume that $\U$ is $\tau(\U,\Y)$-complete. A convex set $C\subset\U$ is relatively $\sigma(\U,\Y)$-compact if and only if, for every $y\in\Y$, the set $\{u\cdot y\mid u\in C\}$ is relatively $\sigma(L^1,L^\infty)$-compact in $L^1$.
\end{corollary}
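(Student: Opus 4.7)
The plan is to treat the two directions separately. The \emph{only if} direction is immediate from Lemma~\ref{lem:solid}: if $\bar{C}$ denotes the $\sigma(\U,\Y)$-closure, then for each $y\in\Y$ the map $u\mapsto u\cdot y$ sends the compact set $\bar C$ continuously into $(L^1,\sigma(L^1,L^\infty))$, so its image is compact and contains $\{u\cdot y\mid u\in C\}$, which is therefore relatively weakly compact.

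For the \emph{if} direction I would exploit Theorem~\ref{thm:converse}: the hypothesis produces a collection $\P$ of seminorms satisfying \ref{ax1}--\ref{ax5}, hence by Remark~\ref{rem:complete} the Eberlein--\v Smulian property holds for $\sigma(\U,\Y)$. It therefore suffices to show that every sequence $(u^\nu)\subset C$ has a subsequence $\sigma(\U,\Y)$-convergent to some $u\in\U$. First, specializing the hypothesis to constant $y$'s shows $C$ itself is relatively weakly compact in $L^1$, so a subsequence $(u^{\nu_k})$ converges in $\sigma(L^1,L^\infty)$ to some $u\in L^1$. To see $u\in\U$, note that for each $y\in\Y$ the relative weak compactness of $\{u\cdot y\mid u\in C\}$ gives $\sup_{v\in C}|E[v\cdot y]|<\infty$, so $C$ is $\sigma(\U,\Y)$-bounded and hence, by compatibility of the $\P$-topology, $\sup_{v\in C}p(v)<\infty$ for every $p\in\P$; lower semicontinuity of $p$ in $L^1$ (Theorem~\ref{thm:converse}) then yields $p(u)\le\liminf_k p(u^{\nu_k})<\infty$, so $u\in\bigcap_{p\in\P}\dom p=\U$.

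It remains to upgrade $\sigma(L^1,L^\infty)$-convergence to $\sigma(\U,\Y)$-convergence along $(u^{\nu_k})$, and this is the main obstacle because naive pairing fails: $\sigma(L^1,L^\infty)$-convergence of $u^{\nu_k}$ does \emph{not} automatically yield $\sigma(L^1,L^\infty)$-convergence of $u^{\nu_k}\cdot y$ for $y\in\Y\setminus L^\infty$. I would resolve this exactly as in the proof of Corollary~\ref{cor:solidseq}: fix $y\in\Y$; the sequence $(u^{\nu_k}\cdot y)$ lies in the relatively weakly compact set $\{v\cdot y\mid v\in C\}$, so some sub-subsequence converges in $\sigma(L^1,L^\infty)$ to a limit $\xi\in L^1$. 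Applying Mazur's theorem to this sub-subsequence, and then passing to an almost sure subsequence via Koml\'os, I obtain convex combinations converging both in $L^1$-norm to $\xi$ and almost surely to $u$ (because the same convex combinations of $u^{\nu_k}$ converge in $L^1$ to $u$). Dominated convergence along the common values $y\cdot(\cdot)$ forces $\xi=u\cdot y$ a.s., and since every weak cluster point of $(u^{\nu_k}\cdot y)$ equals $u\cdot y$, the whole sequence converges weakly to $u\cdot y$ in $L^1$. In particular $E[u^{\nu_k}\cdot y]\to E[u\cdot y]$, so $u^{\nu_k}\to u$ in $\sigma(\U,\Y)$, and Eberlein--\v Smulian finishes the argument.
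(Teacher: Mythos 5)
Your proof is correct in substance and follows the same skeleton as the paper's: necessity from Lemma~\ref{lem:solid}, and, for sufficiency, reduction to sequences via Theorem~\ref{thm:converse} and the Eberlein--Smulian property of Remark~\ref{rem:complete}, followed by extraction of a $\sigma(L^1,L^\infty)$-limit $u$ and identification of $\lim E[u^{\nu_k}\cdot y]$ with $E[u\cdot y]$ for each $y\in\Y$. Where you diverge is in how the last two steps are carried out. The paper shows that $(u^\nu)$ is $\sigma(\U,\Y)$-Cauchy by the truncation/uniform-integrability argument of Lemma~\ref{lem:solidcomp} (splitting $y$ into $y1_{\{|y|\le n\}}\in L^\infty$ and a uniformly small tail, using Dunford--Pettis on $\{v\cdot y\mid v\in C\}$) and then invokes sequential completeness (Corollary~\ref{cor:solidseq}), which packages both the membership $u\in\U$ and the identification. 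You instead prove $u\in\U$ by hand (boundedness of $C$ in every compatible topology plus weak lower semicontinuity of the seminorms in $L^1$ --- this is fine) and identify the cluster points of $(u^{\nu_k}\cdot y)$ via Mazur and almost-sure convergence, which is exactly the mechanism inside the paper's proof of Corollary~\ref{cor:solidseq}. One step needs repair: you apply Mazur to the sub-subsequence of $(u^{\nu_k}\cdot y)$ and then assert that ``the same convex combinations of $u^{\nu_k}$ converge in $L^1$ to $u$''; this does not follow, since $(u^{\nu_k})$ only converges weakly and Mazur combinations chosen for $(u^{\nu_k}\cdot y)$ need not norm-converge in the other coordinate. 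The fix is routine: apply Mazur to the pair $(u^{\nu_{k_j}},u^{\nu_{k_j}}\cdot y)$ in $L^1\times L^1$, which converges weakly to $(u,\xi)$, to get convex combinations norm-convergent in both coordinates simultaneously; or apply Mazur to $(u^{\nu_{k_j}})$ first, as in the paper's Corollary~\ref{cor:solidseq}, noting that forward convex combinations of $(u^{\nu_{k_j}}\cdot y)$ still converge weakly to $\xi$. Koml\'os is not needed, since a norm-convergent sequence already admits an a.s.-convergent subsequence. With that adjustment the argument is complete.
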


\begin{proof}

Since continuous images of relatively compact sets are relatively compact, Lemma~\ref{lem:solid} gives the necessity. For the sufficiency, it suffices, by Theorem~\ref{thm:converse} and Remark~\ref{rem:complete}, to show sequential relative compactness. Let $(u^\nu)$ be a sequence in $C$. 
As in the proof of Lemma~\ref{lem:solidcomp}, we get that there is $u\in L^1$ such that, for every $y\in\Y$ and $\epsilon>0$,
\[
|E[(u^\nu-u)\cdot y]| \le 2\epsilon,
\]
for $\nu$ large enough, so $(u^\nu)$ is Cauchy in $\sigma(\U,\Y)$. By Corollary~\ref{cor:solidseq}, $(u^\nu)$ converges to $u$.
\end{proof}

\section*{Appendix}
This appendix studies integration of  measurable  not-necessarily bounded functions with respect to a real-valued finitely additive measure $m$. Define $r_m:L^1_+\to\ereals$ by
\[
r_m(\eta) := \sup_{u'\in L^\infty} \{\int_\Omega u' dm \mid |u'|\le\eta\}.
\]

\begin{lemma}\label{lem:r}
For any real-valued finitely additive measure $m$,
\begin{enumerate}
\item\label{item:rel}
Relative to $L^\infty$,
\[
r_m(\eta) = \sup_{u'\in L^\infty} \left\{ \int_\Omega \eta(u' dm) \midb |u'|\le 1\right\} \le ||\eta||_{L^\infty}||m||_{TV}.
\]
In particular, $r_m$ is $L^\infty$-norm continuous and sublinear relative to $L^\infty_+$.
\item\label{item:proj}
For every $\eta\in L^1_+$,
\[
r_m(\eta)=\lim_{\nu\nearrow\infty} r_m(\eta\wedge \nu)
\]
\item\label{item:phsa}
$r_m$ is positively homogeneous and subadditive and $r_m(\eta')\le r_m(\eta)$ whenever $\eta'\le \eta$.
\end{enumerate}
\end{lemma}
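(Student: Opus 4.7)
I would tackle the three items out of the stated order, disposing first of the cheap set-theoretic facts and saving the $L^1_+$ subadditivity of item~(\ref{item:phsa}) for last. Positive homogeneity of $r_m$ is the change of variables $u'\mapsto\alpha u'$ in the defining supremum, and monotonicity follows from the set inclusion $\{u'\in L^\infty:|u'|\le\eta'\}\subseteq\{u'\in L^\infty:|u'|\le\eta\}$ whenever $\eta'\le\eta$. Both extend immediately to all of $L^1_+$, so the only remaining piece of item~(\ref{item:phsa}) will be subadditivity on $L^1_+$.

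For item~(\ref{item:rel}), fix $\eta\in L^\infty_+$. The map $u'\mapsto\eta u'$ bijects $\{u'\in L^\infty:|u'|\le 1\}$ onto $\{u''\in L^\infty:|u''|\le\eta\}$, with inverse $u''\mapsto u''/\eta$ on $\{\eta>0\}$ extended by $0$ elsewhere; this yields the displayed equality once one identifies $\int\eta\,(u'\,dm)=\int\eta u'\,dm$ on simple $u'$ and extends by $L^\infty$-continuity. The bound $r_m(\eta)\le\|\eta\|_{L^\infty}\|m\|_{TV}$ is then immediate from Theorem~\ref{thm:YH} applied to the bounded function $\eta u'$. Subadditivity on $L^\infty_+$ drops out of the first expression by linearity of the integral inside the supremum, and $L^\infty$-norm continuity follows from sublinearity together with the bound, via $|r_m(\eta_1)-r_m(\eta_2)|\le r_m(|\eta_1-\eta_2|)\le\|\eta_1-\eta_2\|_{L^\infty}\|m\|_{TV}$.

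Item~(\ref{item:proj}) is a truncation argument. Monotonicity gives $\lim_\nu r_m(\eta\wedge\nu)\le r_m(\eta)$; for the reverse, any admissible $u'\in L^\infty$ is automatically bounded, hence $|u'|\le\eta\wedge\|u'\|_{L^\infty}$, so $\int u'\,dm\le r_m(\eta\wedge\|u'\|_{L^\infty})\le\lim_\nu r_m(\eta\wedge\nu)$, and taking the supremum over such $u'$ delivers $r_m(\eta)\le\lim_\nu r_m(\eta\wedge\nu)$. Finally, to prove subadditivity on $L^1_+$, given $u'\in L^\infty$ with $|u'|\le\eta_1+\eta_2$ I would use the measurable splitting $u'_i:=u'\eta_i/(\eta_1+\eta_2)$ on $\{\eta_1+\eta_2>0\}$ and $u'_i:=0$ elsewhere: then $u'=u'_1+u'_2$, $|u'_i|\le\eta_i$, and $|u'_i|\le|u'|$ keeps each $u'_i$ in $L^\infty$, whence $\int u'\,dm=\int u'_1\,dm+\int u'_2\,dm\le r_m(\eta_1)+r_m(\eta_2)$ and taking the supremum concludes. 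The only mildly delicate point — and the one I would flag as the main obstacle — is precisely this weighted decomposition: one must keep the summands bounded so that their integrals remain legitimately defined via Theorem~\ref{thm:YH}, and the weighted split does exactly that.
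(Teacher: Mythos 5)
Your proof is correct, and it diverges from the paper's in one substantive place: the subadditivity of $r_m$ on $L^1_+$. The paper proves subadditivity only on $L^\infty_+$ in item~1 (by linearity inside the supremum, exactly as you do) and then lifts it to $L^1_+$ via truncation, using the elementary inequality $(\eta^1+\eta^2)\wedge\nu\le\eta^1\wedge\nu+\eta^2\wedge\nu$ together with item~2 to pass to the limit; so in the paper item~3 genuinely depends on items~1 and~2. You instead split the test function directly by the measurable weights $u'_i=u'\eta_i/(\eta_1+\eta_2)$ on $\{\eta_1+\eta_2>0\}$, which keeps each piece in $L^\infty$ with $|u'_i|\le\eta_i$ and gives subadditivity on all of $L^1_+$ in one stroke, independently of the truncation lemma. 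Your route is more elementary and self-contained for item~3 (and your reverse inequality in item~2, via $|u'|\le\eta\wedge\|u'\|_{L^\infty}$, is a touch more direct than the paper's appeal to $L^\infty$-norm continuity of $r_m$ along $|u'|\wedge\nu\to|u'|$); the paper's route buys a reusable truncation identity that it exploits again in Theorem~\ref{thm:ext1d} to approximate elements of $\dom\rho_m$ by bounded functions. Two cosmetic points: the map $u'\mapsto\eta u'$ is only surjective, not bijective, onto $\{|u''|\le\eta\}$ when $\eta$ vanishes on a set of positive measure, but surjectivity is all the equality of suprema needs; and in the weighted splitting you should note that $u'=0$ a.e.\ on $\{\eta_1+\eta_2=0\}$ so that $u'=u'_1+u'_2$ holds a.e. Neither affects correctness.
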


\begin{proof}
The expression in \ref{item:rel} follows from the change of variables $\tilde u= \eta u'$. To prove \ref{item:proj}, the inequality $r_m(\eta)\ge \lim_\nu r_m(\eta\wedge \nu)$ is clear. To prove the opposite inequality, let $\alpha\in\reals$ with $r_m(\eta)>\alpha$. There exists $u'\in L^\infty$ with $|u'|\le \eta$ and $r_m(|u'|)>\alpha$. Then $|u'|\wedge \nu \to |u'|$ in $L^\infty$-norm, so monotonicity and \ref{item:rel} give
\[\lim r_m(\eta\wedge \nu)\ge \lim r_m(|u'|\wedge  \nu ) >\alpha.
\] 

In \ref{item:phsa}, only subadditivity requires a proof. Given $\eta^1,\eta^2\in\dom p$, we have $(\eta^1+\eta^2)\wedge \nu\le\eta^1\wedge \nu+\eta^2\wedge \nu$. Indeed, a concave function vanishing at the origin is subadditive on the positive reals. Thus, by \ref{item:rel} and \ref{item:proj},
\begin{align*}
r_m(\eta^1+\eta^2)&=\limsup_\nu r_m((\eta^1+\eta^2)\wedge \nu)\\
&\le\limsup_\nu(r_m(\eta^1\wedge \nu)+r_m(\eta^2\wedge \nu))\\
&\le\limsup_\nu r_m(\eta^1\wedge \nu)+\limsup_\nu r_m(\eta^2\wedge \nu)\\
&=r_m(\eta^1)+r_m(\eta^2),
\end{align*}
which proves the subadditivity.
\end{proof}

Define $\rho_m:L^1\to\ereals$ by
\[
\rho_m(u) := r_m(|u|).
\]

\begin{theorem}\label{thm:ext1d}
For any real-valued finitely additive measure $m$,
\begin{enumerate}
\item\label{item:rhophsa}
$\rho_m$ is symmetric and sublinear, and $\rho_m(u')\le \rho_m(u)$ whenever $|u'|\le|u|$,
\item\label{item:dense}
for any $u\in\dom\rho_m$ and $\epsilon>0$, there exists $u'\in L^\infty$ with $\rho_m(u-u')<\epsilon$,
\item $\int_\Omega u dm$ has a unique $\rho_m$-continuous linear extension from $L^\infty$ to $\dom\rho_m$,
\item if $m$ is purely finite additive, there exists a decreasing $(A^\nu)_{\nu=1}^\infty\subset\F$ with $P(A^\nu)\searrow 0$ and $\int_\Omega u1_{\Omega\backslash A^\nu} dm =0$ for all $u\in\dom\rho_m$.
\end{enumerate}
\end{theorem}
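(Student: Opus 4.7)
Part~\ref{item:rhophsa} is immediate from Lemma~\ref{lem:r}\ref{item:phsa}: symmetry and positive homogeneity of $|\cdot|$ transfer through the definition $\rho_m(u)=r_m(|u|)$, while subadditivity and monotonicity follow from the triangle inequality $|u^1+u^2|\le|u^1|+|u^2|$ together with the monotonicity and subadditivity of $r_m$.

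The technical core is Part~\ref{item:dense}. I will first upgrade the subadditivity of Lemma~\ref{lem:r}\ref{item:phsa} to full \emph{additivity} of $r_m$ on $L^\infty_+$: the reverse inequality $r_m(\eta^1+\eta^2)\ge r_m(\eta^1)+r_m(\eta^2)$ is obtained by picking $u^{i,\prime}\in L^\infty$ with $|u^{i,\prime}|\le\eta^i$ nearly attaining each $r_m(\eta^i)$, noting that $u^{1,\prime}+u^{2,\prime}$ is admissible in the defining supremum of $r_m(\eta^1+\eta^2)$, and invoking linearity of the integral on $L^\infty$. For $u\in\dom\rho_m$, set $u^\nu:=(u\wedge\nu)\vee(-\nu)$, so $|u^\nu|=|u|\wedge\nu$ and $|u-u^\nu|=(|u|-\nu)^+$. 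From the pointwise identity $(|u|-\nu)^+\wedge N + |u|\wedge\nu = |u|\wedge(N+\nu)$ and additivity of $r_m$ on $L^\infty_+$,
\[
r_m\bigl((|u|-\nu)^+\wedge N\bigr) = r_m\bigl(|u|\wedge(N+\nu)\bigr) - r_m(|u|\wedge\nu).
\]
Sending $N\to\infty$ and twice using Lemma~\ref{lem:r}\ref{item:proj} yields $r_m((|u|-\nu)^+) = r_m(|u|) - r_m(|u|\wedge\nu)$, which tends to zero as $\nu\to\infty$ by Lemma~\ref{lem:r}\ref{item:proj} once more, so $\rho_m(u-u^\nu)\to 0$.

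Part~3 is then the standard extension-by-density argument. By Part~\ref{item:rhophsa}, $\rho_m$ is a seminorm on $\dom\rho_m$; the bound $|\int u\,dm|\le r_m(|u|)=\rho_m(u)$ shows the integral is $\rho_m$-Lipschitz on $L^\infty$; and by Part~\ref{item:dense}, $L^\infty$ is $\rho_m$-dense in $\dom\rho_m$. For any $u\in\dom\rho_m$, approximating by a sequence $(u^\nu)\subset L^\infty$ the real numbers $\int u^\nu\,dm$ form a Cauchy sequence whose limit depends only on $u$, defining the unique $\rho_m$-continuous linear extension.

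For Part~4, when $m$ is purely finitely additive, the Yosida--Hewitt machinery \cite[Theorems~1.21 and 1.22]{yh52} supplies a decreasing $(A^\nu)\subset\F$ with $P(A^\nu)\downto 0$ and $|m|(\Omega\setminus A^\nu)=0$. For $u\in L^\infty$ this gives $\int u\,1_{\Omega\setminus A^\nu}\,dm=0$ directly; for general $u\in\dom\rho_m$ I approximate by $u^k\in L^\infty$, observe that $\rho_m(u1_{\Omega\setminus A^\nu} - u^k 1_{\Omega\setminus A^\nu})\le\rho_m(u-u^k)\to 0$ by Part~\ref{item:rhophsa}, and pass to the limit using the $\rho_m$-continuity of the extended integral. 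The main obstacle throughout is Part~\ref{item:dense}, specifically establishing additivity of $r_m$ on $L^\infty_+$ and carefully executing the truncation argument so the outer supremum in the definition of $r_m$ interacts properly with truncation; once this is in place, the remaining parts follow by standard considerations.
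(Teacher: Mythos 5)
Your proof is correct and rests on the same two pillars as the paper's own argument: superadditivity of the supremum defining $r_m$, obtained by adding near-optimal test functions, and the truncation limit of Lemma~\ref{lem:r}(\ref{item:proj}). The one genuine difference is in how the signed case is handled. The paper first assumes $m\ge 0$, proves superadditivity of $\rho_m$ on $\dom\rho_m\cap L^1_+$ in that case, treats $u^+$ and $u^-$ separately, and only at the end invokes the Jordan decomposition $m=m^+-m^-$ (at the cost of a factor $4\epsilon$). You observe instead that superadditivity of $r_m$ holds for signed $m$ directly, since $|u^{1,\prime}+u^{2,\prime}|\le\eta^1+\eta^2$ preserves admissibility in the defining supremum while the integral on $L^\infty$ is linear; combining this with the pointwise identity $|u-u^\nu|=(|u|-\nu)^+$ and the exact telescoping $r_m\bigl((|u|-\nu)^+\bigr)=r_m(|u|)-r_m(|u|\wedge\nu)$ dispenses with both the decomposition of $m$ and the splitting of $u$, which is a slightly cleaner way to run the same argument. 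Parts 3 and 4 match the paper in substance: you build the extension by Cauchy sequences where the paper uses Hahn--Banach plus density for uniqueness, and your explicit approximation argument for Part 4 fills in what the paper compresses into the remark that $r_m$ ``inherits'' the vanishing on $\Omega\setminus A^\nu$.
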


\begin{proof}
Properties in 1 are clear. To prove \ref{item:dense}, assume first that $m$ is nonnegative.  Given $u^i\in\dom\rho_m\cap L^1_+$ and $\epsilon>0$, let $\tilde u^i \in L^\infty$ be such that $0\le \tilde u^i\le u^i$ and $\rho_j(u^i)\le\langle\tilde u^i,m\rangle+\epsilon$. Then $\tilde u^1+\tilde u^2\le u^1+u^2$ and
\[
\rho_m(u^1)+\rho_m(u^2) \le \langle \tilde u^1+\tilde u^2,m\rangle + 2\epsilon \le \rho_m(u^1+u^2) + 2\epsilon.
\]
Since $\epsilon>0$ was arbitrary, $\rho_m$ is superlinear on $\dom\rho_m\cap L^1_+$. Given $u\in\dom\rho_m$ and $\epsilon>0$, Lemma~\ref{lem:r} gives $\rho_m(u^+)\le \rho_m(u^+\wedge \nu)+\epsilon$ for $\nu$ large enough. By superlinearity, 
\[
\rho_m(u^+-u^+\wedge\nu)+\rho_m(u^+\wedge\nu)\le \rho_m(u^+) \le \rho_m(u^+\wedge\nu)+\epsilon.
\]
Similarly, $\rho_m(u^-- u^-\wedge \nu)\le \epsilon$, so $\rho_m(u-\pi_{\nu\uball} u)\le 2\epsilon$ by sublinearity of $\rho_m$. By \cite[Theorem 1.12]{yh52}, general $m\in\M^1$ can be written as $m=m^+-m^-$ for nonnegative $m^+,m^-\in\M^1$, so
\[
\rho_m(u-\pi_{\nu\uball} u) \le \rho_{m^+}(u-\pi_{\nu\uball} u)+\rho_{m^-}(u-\pi_{\nu\uball} u) \le 4\epsilon
\]
for $\nu$ large enough.

We have $\int_\Omega u dm\le\rho_m(u)$ on $L^\infty$, so, by Hahn-Banach, there exists a $\rho_m$-continuous linear extension of $m$ to $\dom\rho_m$. Since $L^\infty$ is dense in $\dom\rho_m$, the extension is unique. If $m$ is purely finitely additive, there exists $(A^\nu)_{\nu=1}^\infty\subset\F$ with $P(A^\nu)\searrow 0$ and $\int_\Omega u1_{\Omega\backslash A^\nu} dm =0$ for all $u\in L^\infty$. Note that $r_m$ inherits this property so that $\rho_m$ and the integral does as well.
\end{proof}

\bibliographystyle{plain}
\bibliography{sp}

\end{document}